\numberwithin{equation}{section}  
\newcommand{\beq}{\begin{equation}} 
\newcommand{\eeq}{\end{equation}} 
\newcommand{\bea}{\begin{aligned}}
\newcommand{\eea}{\end{aligned}}
\newcommand{\bdm}{\begin{displaymath}}
\newcommand{\edm}{\end{displaymath}}
\newcommand{\barr}{\begin{array}}
\newcommand{\earr}{\end{array}}
\newcommand{\ben}{\begin{enumerate}}
\newcommand{\een}{\end{enumerate}}
\newcommand{\bde}{\begin{description}}
\newcommand{\ede}{\end{description}}
\newtheorem{teor}{Theorem}[section]
\newtheorem{prop}[teor]{Proposition}  
\newtheorem{lem}[teor]{Lemma}  
\newtheorem{cor}[teor]{Corollary}
\newtheorem{rem}[teor]{Remark}
\newcommand{\R}{\mathbb{R}}
\newcommand{\PP}{\mathbb{P}}
\newcommand{\E}{{\mathbb{E}}}
\newcommand{\defi}{\stackrel{\hbox{\tiny def}}{=}}
\newcommand{\be}{\beta}
\newcommand{\de}{\delta}
\newcommand{\dd}{\text{d}}
\newcommand{\ee}{\text{e}}
\newcommand{\w}{\omega}
\newcommand{\s}{\sigma}
\newcommand{\vare}{\varepsilon}
\def\text#1{\;\hbox{\rm #1}\;}
\begin{document}
\title[Genealogy of branching Brownian particles]{The genealogy of extremal particles \\ of Branching Brownian Motion}
\author{L.-P. Arguin, A. Bovier and N. Kistler}
 \address{L.-P. Arguin\\ Courant Institute of Mathematical Sciences \\
New York University \\
251 Mercer St. New York, NY 10012}
\email{arguin@math.nyu.edu}
\address{A. Bovier\\Institut f\"ur Angewandte Mathematik\\Rheinische
   Friedrich-Wilhelms-Uni\-ver\-si\-t\"at Bonn\\Endenicher Allee 60\\ 53115
   Bonn,Germany}
\email{bovier@uni-bonn.de}

\address{N. Kistler\\Institut f\"ur Angewandte Mathematik\\Rheinische
   Friedrich-Wilhelms-Uni\-ver\-si\-t\"at Bonn\\Endenicher Allee 60\\ 53115
   Bonn,
Germany}
\email{nkistler@uni-bonn.de}

\subjclass[2000]{60J80, 60G70, 82B44} \keywords{traveling Waves, Branching Brownian Motion, Extreme Value Theory and Extremal Process, Entropic Repulsion}

\thanks{L.-P. Arguin is supported by the NSF grant DMS-0604869 and partially by the Hausdorff Center for Mathematics, Bonn. 
A. Bovier is partially supported through the German Research Council in the SFB 611. N. Kistler is partially supported by the Deutsche Forschungsgemeinschaft, Contract No. DFG GZ BO 962/5-3. The kind hospitality of Eurandom, Eindhoven, the Hausdorff Center for Mathematics, Bonn, and the Technion, Haifa, where
part of this work has been carried out, are gratefully acknowledged.}

 \date{\today}

\begin{abstract} 
Branching Brownian Motion describes a system of particles which diffuse in space and split into offsprings according to a certain random mechanism. In virtue of the groundbreaking work by M. Bramson on the convergence of solutions of the Fisher-KPP equation to traveling waves \cite{bramson, bramson_monograph}, the law of the rightmost particle in the limit of large times is rather well understood. In this work, we address 
the full statistics of the extremal particles (first-, second-, third- etc. largest). In particular, we prove that in the large $t-$limit, such particles descend with overwhelming probability from ancestors having split either within a distance of order one from time $0$, or within a distance of order one from time $t$. The approach relies on characterizing, up to a certain level of precision, the paths of the extremal particles. As a byproduct, a heuristic picture of Branching Brownian Motion ``at the edge'' emerges, which sheds light on the still unknown limiting extremal process. 
\end{abstract}

\maketitle

\tableofcontents

\section{Introduction} 
Branching Brownian Motion (BBM for short) is a continuous-time Markov branching process which is constructed as follows: a single particle performs a standard Brownian Motion $x$ issued on some probability space $(\Omega, \mathcal F, \PP)$ with $x(0)=0$, which it continues for an exponential holding time $T$ independent of $x$, with $\PP\left[ T > t\right] = \ee^{-t}$. At time $T$, the particle splits independently of $x$ and $T$ into $k$ {\it offsprings} with probability $p_k$, where $\sum_{k=1}^\infty p_k = 1$, $\sum_{k=1}^\infty k p_k = 2$, and $K \defi \sum_{k} k(k-1) p_k < \infty$. These particles continue along independent Brownian paths starting at $x(T)$, and are subject to the same splitting rule, with the effect that the resulting tree $\mathfrak X$ contains, after an elapsed time $t>0$, a random number, $n(t)$, of  particles located at $x_1(t), \dots, x_{n(t)}(t)$. Clearly,  $\E n(t)=e^t$. 
With 
\beq \label{bbm_repr}
u(t, x) \defi \PP\left[ \max_{1\leq k \leq n(t)} x_k(t) \leq x \right],
\eeq
a standard renewal argument, first observed by McKean \cite{mckean}, 
shows that $u(t,x)$ solves
the Kolmogorov-Petrovsky-Piscounov or Fisher (F-KPP) equation, 
\beq \bea \label{kpp_equation}
& u_t = \frac{1}{2} u_{xx} + \sum_{k=1}^\infty p_k u^k -u, \\
& u(0, x)= 
\begin{cases}
1, \; \text{if}\;  x\geq 0,\\
0, \, \text{if}\; x < 0. 
\end{cases}
\eea \eeq 

The F-KPP equation is arguably one of the simplest p.d.e. that admits traveling wave solutions. It is well known that there exists a unique solution satisfying 
\beq \label{traveling_one}
u\big(t, m(t)+ x \big) \to \omega(x), \qquad \text{uniformly in}\;  x,\; \text{as} \; t\to \infty,
\eeq
with the centering term, the {\it front} of the wave, given by
\beq \label{centering_kpp}
m(t) = \sqrt{2} t - \frac{3}{2 \sqrt{2}} \log t, 
\eeq
and $\w(x)$ the unique (up to translation) distribution function which solves the o.d.e. 
\beq \label{wave_pde}
\frac{1}{2} \omega_{xx} + \sqrt{2} \omega_x + \omega^2 - \omega = 0.
\eeq
The leading order of the front has been established through purely analytic means by Kolmogorov, Petrovsky and Piscounov \cite{kpp}. The far more delicate issue of the logarithmic corrections has been settled by Bramson \cite{bramson}, who exploited the probabilistic interpretation of the F-KPP equation in terms of BBM.
Both F-KPP equation and BBM have attracted a great deal of interest ever since: the reader is referred to the very partial list  \cite{aronson_weinberger, aronson_weinberger_two, elworthy_truman_zhao_gaines,elworthy_zhao} for results of analytical flavor, and to \cite{biggins, derrida_spohn, harris, harris_harris_kyprianou, lalley_sellke, neveu} for more probability-oriented work. \\

The large time asymptotic of the maximal displacement of BBM is a paradigm for the behavior of extrema of random fields, which is a classical problem in probability theory. In the case of BBM, the correlations among particles are given in terms of the {\it genealogical distance}: for $i, j\in \Sigma_t \defi \{1, \dots, n(t)\}$ and conditionally upon the branching mechanism, it holds 
\beq
\E\left[ x_i(t) \cdot x_j(t) \right] = Q_{t}(i,j), \label{genealogical}
\eeq
where $Q_t(i,j) \defi \sup\{s\leq t: \; x_i(s)=x_j(s) \}$ is the time to first branching. (In spin glass terminology, $Q_t(i,j)$ is the {\it overlap} between configuration $i$ and $j$). Since $Q_{t}$ can take any value in $[0,t]$, one might expect that the maximal displacement of BBM lies  considerably lower than in the independent, identically distributed (i.i.d.) setting. Somewhat surprisingly, this is not the case: to {\it leading} order, it coincides with that of $\lfloor \ee^t \rfloor$ independent centered Gaussians of variance $t$, in which case the correct centering is well known to be
\beq \label{rem_corr}
r(t) \defi \sqrt{2} t - \frac{1}{2\sqrt{2}} \log t.
\eeq
We will refer henceforth to the Gaussian i.i.d. setting as the {\it Random Energy Model} of Derrida, or REM for short \cite{derrida_rem}.  The law of the maximum of a REM is known to belong to the domain of attraction of the Gumbel distribution,
$G(x) \defi \exp\left(-\ee^{-\sqrt 2 x}\right)$, see e.g. \cite{leadbetter}. Although BBM does not belong to this universality class (it is straightforward to check that $G$ does not solve \eqref{wave_pde}), the distribution of its maximum is still Gumbel-like. Indeed, denoting by
\beq\label{anton.3}
Z(t) \defi \sum_{k=1}^{n(t)} \left( \sqrt{2}t -x_k(t) \right) \exp-\sqrt{2}\left( \sqrt{2}t -x_k(t) \right)
\eeq
the so-called {\it derivative martingale}, Lalley \& Sellke \cite{lalley_sellke} proved that $Z(t)$ converges weakly to a strictly positive 
random variable $Z$, and established the integral representation 
\beq \label{gumbel_like}
\omega(x) = \E\left[ \ee^{- C Z \ee^{-\sqrt{2}x} }\right]
\eeq
(for some $C>0$). This exposes the law of the maximum of BBM as a {\it random shift} of the Gumbel distribution. 

It is also known that the limiting derivative martingale has infinite mean, $\E[Z] = +\infty$. This affects the asymptotics to the right to the extent that 
\beq \label{to_the_right}
1-\omega(x) \sim x \ee^{-\sqrt{2} x}, \quad x\to +\infty, 
\eeq
with $\sim$ meaning that the ratio of the terms converges to a positive constant in the considered limit (see e.g. Bramson \cite{bramson_monograph} and Harris \cite{harris}). Tails of the form \eqref{to_the_right} have recently started to appear in different fields, see e.g. the studies on Spin Glasses with logarithmic correlated potentials \cite{bouchaud_fyodorov, carpentier_ledoussal}; there is thus strong evidence for the existence of a {\it new} universality class different from the Gumbel (which has tail $1- G(x) \sim \ee^{-\sqrt{2}x}$ for $x\to +\infty$).
%Despite \eqref{gumbel_like} and \eqref{to_the_right}, the limiting law $\omega$ still remains a rather mysterious object. To our knowledge the behavior of the left tail of the maximal displacement is not discussed in the literature; we provide its asymptotics in an  appendix, by means of an argument that we learned from Camillo De Lellis \cite{DeLellis}. However, more information on $\omega$ would be definitely desirable. \\

Contrary to the statistics of the maximal displacement, nothing is known on a rigorous level about the
{\it full} statistics of the extremal configurations (first-, second-, third-, etc. largest) in BBM. 
Such a statistics is completely encoded in the {\it extremal process}, which is the point process associated to the collection
of points shifted by the expectation of their maximum (lower order included), namely 
\begin{equation} \label{extremal process}
\mathcal N_t \defi \sum_{i=1}^{n(t)} \de_{\overline{x_i(t)}}\;, \qquad \quad \overline{x_i(t)} \defi x_i(t)-m(t).
\end{equation}
In fact, it is not even known  whether $\mathcal N_t$ converges to a well defined limit at all, although one can easily see that the  
collection of laws is tight, see Proposition \ref{loc_finite} below. On a non-rigorous level, the situation is only slightly better, 
see Section \ref{towards_extremal_process}  below for a discussion of some recent work by Brunet \& Derrida
\cite{derrida_brunet}. 

The extremal process of the REM is well known to be given in the limit of large times by a Poisson point process
with exponential density $\ee^{-\sqrt{2} x} \dd x$ on $\R$. Given the Gumbel-like behavior \eqref{gumbel_like},
one may (perhaps) be tempted to conjecture that the limiting extremal process of BBM is a {\it randomly shifted} Poisson point process, but 
the work by Brunet \& Derrida mentioned above provides strong evidence against this: BBM seems to belong, as far as statistics 
of extremal particles are concerned, to a new universality class, which is expected to describe the extrema of models ``at criticality'', 
such as the $2-$dim Gaussian free field \cite{bolthausen_deuschel_giacomin}, directed polymers on Cayley trees \cite{derrida_spohn}, or spin glasses 
with logarithmic potentials \cite{bouchaud_fyodorov, carpentier_ledoussal}. \\

In this work we obtain some first rigorous results on the statistics of the extremal particles of BBM. Although we cannot yet characterize the limiting extremal process, a clear picture of BBM at the edge emerges from our analysis, which we believe will prove useful for further studies. 

\section{Main results} \label{main_results}
\subsection{The genealogy of extremal particles}
The major difficulty in the analysis of the BBM stems from the delicate dependencies among particles, which are
due to the continuous branching. A first, natural step towards the extremal process is to study it at the level of the Gibbs measure, which is less sensitive to correlations. The Gibbs measure is the random probability measure on the configuration space $\Sigma_t$ attaching to the particle
$k\in \Sigma_t$ the weight 
\beq\label{anton.4}
\mathcal G_{\be,t}(k) \defi \frac{\exp \be x_k(t)}{\mathcal Z_t(\be)} , \;\text{where} \mathcal Z_t(\be) \defi  \sum_{j=1}^{n(t)}  \exp \beta x_j(t),
\eeq
where $\be>0$ is the inverse temperature. A first study of the Gibbs measure of BBM was carried out by Derrida \& Spohn \cite{derrida_spohn}. Through comparisons with Derrida's GREM, and exploiting the Ghirlanda-Guerra identities  introduced in the context of the Sherrington-Kirkpatrick model \cite{ghirlanda_guerra}, Bovier \& Kurkova \cite{BovierKurkova_II} put on rigorous ground the findings by Derrida \& Spohn, thereby proving in particular that, for $\be> \sqrt{2}$, the law  of the {\it normalized} time to first branching under the product Gibbs measure over the replicated space $\Sigma_t \times \Sigma_t$ converges in distribution to the superposition of two delta functions, 
\beq \label{convergence_gibbs}
\lim_{t\to \infty} \mathcal G_{\be,t} \otimes \mathcal G_{\be, t}\left( \frac{Q_{i,j}(t)}{t} \in \dd x \right)  = c_o \de_0(\dd x) + (1-c_o) \de_1(\dd x).
\eeq
for some $\beta$-dependent $0< c_o < 1$. Hence, the support of the Gibbs measure is restricted to ``almost  uncorrelated'' particles. Since the Gibbs measure favors extremal configurations, one may wonder whether a similar result holds true also at the level
of the extremal process. In this paper, we answer this question in the affirmative. In fact, we prove a {\it stronger} result which concerns the
{\it unnormalized} time to first branching of extremal particles: denoting by
$\Sigma_t(D) \defi \left\{ i\in \Sigma_t: \overline{x_i(t)} \in D \right\}$
the set of particles falling into the subset $m(t)+D$, we have: 

\begin{teor}[The genealogy of extremal particles.] \label{teor_overlaps_extremality}
For any compact $D\subset R$, 
\beq\label{anton.400}
 \lim_{r\to \infty} \sup_{t > 3r} \PP\Big[ \exists i,j \in  \Sigma_t(D): \; Q_t(i, j) \in (r, t-r) \Big] = 0.
\eeq
\end{teor}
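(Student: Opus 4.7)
The plan is to reduce the statement to a first--moment bound on the number of pairs of extremal particles branching in the bulk of $[0,t]$, and then to tame that first moment with an a priori path--localization estimate for extremal particles.

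By Markov's inequality,
\[
\PP\Big[\exists\,i,j\in\Sigma_t(D)\colon Q_t(i,j)\in(r,t-r)\Big]\;\le\;\E\Big[\#\big\{(i,j)\colon i<j,\ i,j\in\Sigma_t(D),\ Q_t(i,j)\in(r,t-r)\big\}\Big].
\]
Conditioning on the branching time $s$ of the most recent common ancestor and on his position $y$, and invoking the branching/Markov structure of BBM (many--to--two), the right--hand side is, up to the multiplicative constant $K/2$, equal to
\[
\int_{r}^{t-r}\!\ee^{2t-s}\!\int_{\R}\!\frac{\ee^{-y^{2}/(2s)}}{\sqrt{2\pi s}}\;\PP\bigl[y+B_{t-s}\in m(t)+D\bigr]^{2}\,\dd y\,\dd s.
\]
A saddle--point evaluation using the Gaussian density for $B_{t-s}$ shows, however, that this bare first moment is exponentially large in $t$ for $s$ in the bulk of $(r,t-r)$: ancestors located much above $\sqrt{2}\,s$ at time $s$ still produce two extremal descendants with $O(1)$ probability, and such (atypical) trajectories swamp the naive count.

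This slack is eliminated by intersecting the event in question with an a priori path--localization event $\mathcal B_t$, requiring that every particle $i\in\Sigma_t(D)$ has ancestral trajectory $x_i(u)\le\gamma_t(u)$ for all $u\in[0,t]$, where $\gamma_t(u)$ is a barrier lying slightly below the chord $u\,m(t)/t$ and bent downward by a logarithmic amount near the two endpoints. The complement $\mathcal B_t^{\,c}$ can be shown to have probability $o_r(1)$ uniformly in $t>3r$ by a separate path--estimate (to be carried out in later sections of the paper, via Bramson's F--KPP bounds combined with ballot--type inequalities for Brownian bridges). On $\mathcal B_t$ the common trunk on $[0,s]$ and each of the two independent descendant branches on $[s,t]$ are forced to lie below $\gamma_t$, and ballot/reflection estimates for such constrained Brownian paths contribute three multiplicative decay factors. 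After a saddle--point analysis in $y$, the refined first--moment integral over $(r,t-r)$ is bounded by a quantity of the form $C/\sqrt{r}$, which vanishes as $r\to\infty$ uniformly in $t>3r$, as required.

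The main obstacle lies in the joint calibration of $\gamma_t$: it must be tight enough that the ballot factors generated by the three constraints actually kill the exponential blow--up of the bare first moment, yet loose enough that $\mathcal B_t$ holds with probability $1-o_r(1)$ for extremal particles. Too loose a barrier leaves the first moment exponentially large in $t$; too tight a barrier is incompatible with the extremal particles ending at $m(t)+O(1)$ at time $t$. The correct choice, dictated by the entropic repulsion picture of BBM at the edge, involves logarithmic corrections to the linear profile near both endpoints of $[0,t]$, and it is precisely this path--level statement that the abstract refers to as the \emph{characterization, up to a certain level of precision, of the paths of extremal particles}.
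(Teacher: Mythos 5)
Your overall strategy is the same as the paper's: bound the pair count via Markov's inequality, expand the expected number of extremal pairs with the many-to-two formula conditioned on the branching time $s$ and the ancestor's position $y$, observe that the bare first moment is too large (of polynomial order in $t$ because the mean number of particles at level $m(t)$ is of order $t$, not $1$), and then intersect with a path-localization event proved separately by Bramson-type estimates. This matches the structure of Sections 3--5 of the paper.

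However, there is a genuine gap in how you handle the $y$--integral, and it is precisely the point where the paper introduces an ingredient you omit. The paper does not localize paths with an upper barrier alone: it establishes a two-sided ``tube'' (Corollary~\ref{cor: tube}) consisting of the \emph{entropic envelope} $E_{t,\alpha}$ from above and the \emph{lower envelope} $E_{t,\beta}$ from below. The lower envelope is what makes the uniform-in-$y$ bound on $\PP[x\in\Xi_{D,t}^{[s,t-r_1]}\mid x(s)=y]$ work: it forces $Z_1=\overline D+\tfrac{s}{t}m(t)-f_{t,\alpha}(s)-y$ into $[0,\,\kappa f_{t,\beta}(s)]$, so the Brownian-bridge ballot estimate of Lemma~\ref{lem: BB estimate} yields a bound of order $\sqrt r\,f_{t,\beta}(s)/(t-s)$ that can be taken out of the $y$--integral. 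Having done that, the $y$--integral collapses back into $\PP[\Xi_{D,t}]$, which is of order $r_1\,t\,e^{-t}$ by a single one-particle barrier computation. Your proposal uses only an upper barrier and relegates the control of $y$ to an unexecuted ``saddle--point analysis''; without the lower envelope, $Z_1$ is unbounded, the ballot factor grows in $Z_1$, and whether the Gaussian weight kills that growth fast enough to give a bound that vanishes in $r$ uniformly in $t$ is exactly what must be checked. It may be possible, but it is the crux of the proof and is not demonstrated. The paper avoids this entirely via the lower envelope.

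Two further inaccuracies are worth flagging. First, you describe the barrier as bent down ``by a logarithmic amount'' near the endpoints, whereas the paper's entropic and lower envelopes use a power $s^\alpha$, $0<\alpha<1/2$ (resp.\ $s^\beta$, $1/2<\beta<1$); the key decay factor $\exp(-\sqrt 2 f_{t,\alpha}(s))$ along the bulk of $[r,t-r]$ is super-polynomial, which is what controls the $s$--integral; a merely logarithmic correction would give only polynomial decay and the comparison against the $t^{3(t-s)/(2t)}$ prefactor would need to be redone. Second, the ``three multiplicative ballot factors'' (trunk plus two branches) is not how the argument is structured: the trunk at time $r$ sits far below the barrier, so it does not produce a ballot factor by itself; the paper instead packages the trunk together with one branch into the single-particle probability $\PP[\Xi_{D,t}]$ and uses a ballot estimate only for the second branch on $[s,t-r_1]$. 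Finally, the claimed rate $C/\sqrt r$ is not what the paper gets; after the envelope constraints the bound decays like $r^{O(1)}e^{-cr^\alpha}$ plus a term superpolynomially small in $t$, i.e.\ faster than any power of $r$.
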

Extremal particles thus descend from common ancestors which either branch off "very early", i.e. in the interval $(0,r)$, or ``very late'', i.e. in the interval $(t-r,t)$, in the course of time.
The proof of Theorem \ref{teor_overlaps_extremality} is given in Section \ref{sec_extremality_overlaps} and relies on results about the localization of the paths of the extremal particles which is of independent interest. Such results on the localization of the paths, and the heuristics behind Theorem
\ref{teor_overlaps_extremality} are presented in Section \ref{loc_paths_sec}.  \\

Let us remark in passing that a complete description of the genealogy for a related model, branching Brownian motion with absorption,
has been recently obtained by Berestycki, Berestycki, and Schweinsberg \cite{b2s}. 
In this model, the Brownian particles possess a negative drift and are killed upon reaching zero. 
They show that, for a particular choice of the drift (for which the population is roughly of order $N$)
and an appropriate time scale (of order $(\log N)^3$),
the genealogy of uniformly sampled particles converges as $N\to\infty$ to the Bolthausen-Sznitman coalescent.
It is unlikely that the same limit holds for the genealogy of extremal particles of BBM.
In fact, this is suggested by Theorem \ref{teor_overlaps_extremality},
since it shows that the branching times are strongly concentrated on early and late times
as opposed to the Bolthausen-Sznitman coalescent where all branching times can occur with positive density.

\subsection{Localization of the paths of extremal particles} \label{loc_paths_sec}
Our approach towards the genealogy of particles at the edge of BBM is based on 
characterizing, up to a certain level of precision, the {\it paths} of extremal particles. 
As a first step towards a characterization, we will prove that such paths  
cannot fluctuate too wildly in the {\it upward} direction. 
In order to formulate this precisely, we introduce some notation. 
For $\gamma>0$, we set
\beq\label{anton.1}
f_{t, \gamma}(s) \defi \begin{cases}
                     
		     s^\gamma & 0\leq s \leq \frac{t}{2} \\
		     (t-s)^{\gamma}& \frac{t}{2} \leq s \leq t \ .
                    \end{cases}
\eeq
The {\it upper envelope at time $t$}, denoted $U_{t, \gamma}$, is defined as 
\beq \label{def_upper_envelope}
U_{t, \gamma}(s) \defi \frac{s}{t}m(t) + f_{t, \gamma}(s). 
\eeq
Notice that $U_{t, \gamma}(t) = m(t)$.
\begin{teor}[Upper Envelope] 
\label{teor_upper_envelope} 
Let $0<\gamma<1/2$. Let also $y\in\R$, $\epsilon >0$ be given.
There exists $r_u=r_u(\gamma,y,\epsilon)$ such that for $r\geq r_u$ and for any $t>3r$,
\beq\label{anton.2} \PP\Big[\exists k\leq n(t): x_k(s) > y +U_{t, \gamma}(s),\;\text{for some}\; s \in [r, t-r] \Big]< \epsilon \ .
\eeq
\end{teor}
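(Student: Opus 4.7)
The plan is to combine a union bound on a discrete time mesh with the sharp Gumbel-like tail of the BBM extremum.

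Setting $V(s) := y + U_{t,\gamma}(s)$, one first notes that $V$ is Lipschitz with constant at most $\sqrt{2} + \gamma$ on $[1, t-1]$, and that the event in question is equivalent to $\{\exists s, \exists j \leq n(s): x_j(s) > V(s)\}$, since each particle alive at time $t$ has a unique ancestor at every earlier time. A standard discretization-continuity argument, controlling the between-integer oscillations of the BBM running maximum via the quasi-stationarity of the recentered extremum, thus reduces matters to showing
\[
\sum_{n=r}^{t-r} \PP\bigl[\max_{j \leq n(n)} x_j(n) > V(n) - C_0\bigr] \longrightarrow 0 \text{ as } r \to \infty, \text{ uniformly in } t > 3r,
\]
for an absolute constant $C_0$ absorbing the Lipschitz slack plus a typical unit-time fluctuation of the maximum.

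The crucial input is the uniform Gumbel-like estimate
\[
\PP\bigl[\max_{j \leq n(s)} x_j(s) > m(s) + a\bigr] \leq C_1 (1 + a)\, e^{-\sqrt{2}\, a}, \qquad s \geq 1,\ a \geq 0,
\]
a quantitative form of \eqref{traveling_one}--\eqref{gumbel_like} together with the asymptotic \eqref{to_the_right}, derivable from Bramson's refined FKPP estimates. Writing $m(n) = \tfrac{n}{t}\, m(t) + \tfrac{3}{2\sqrt{2}}\, h(n)$ with $h(n) := \log n - \tfrac{n \log t}{t}$, the quantity $a_n := V(n) - m(n) - C_0 = f_{t,\gamma}(n) + (y - C_0) + \tfrac{3}{2\sqrt{2}}\, h(n)$ satisfies $0 \leq h(n) \leq \log n$ on $[r, t-r]$ for $r \geq 3$ and $t$ large, since $\log x / x$ attains its maximum at $x = e$ while $h(1)$ and $h(t)$ both vanish (up to $O(\log t / t)$). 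Hence $a_n \geq f_{t,\gamma}(n) + (y - C_0)$, and $1 + a_n \lesssim 1 + n^\gamma + (t-n)^\gamma$ after absorbing $\log n$ into $n^\gamma$ for $n$ above a $\gamma$-dependent threshold. Summing symmetrically about $t/2$ yields
\[
\sum_{n=r}^{t-r} C_1 (1 + a_n)\, e^{-\sqrt{2}\, a_n} \leq 2 C_2 \sum_{m \geq r} (1 + m^\gamma)\, e^{-\sqrt{2}\, m^\gamma},
\]
and the right-hand side tends to $0$ as $r \to \infty$, uniformly in $t > 3r$.

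The principal difficulty is invoking the Gumbel-like tail. A bare first-moment argument yields only $\PP[\max > m(s) + a] \lesssim s \cdot e^{-\sqrt{2}\, a}$; the extraneous factor $s$ would spoil the summation for $n$ close to $t$, where the envelope gap $a_n$ collapses to $\sim r^\gamma$ while the naive bound still scales as $t$. Replacing this factor by the prefactor $(1 + a)$---reflecting the ``Gumbel-like'' $O(1)$-spacing of BBM particles near the maximum, an instance of entropic repulsion---is the key input from Bramson's FKPP analysis and is precisely what renders the total bound uniformly summable in $t$.
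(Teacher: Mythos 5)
Your proposal follows the same route as the paper: reduce to integer times, invoke Bramson's tail estimate \eqref{bramson_upper_bound_one}, use $\tfrac{s}{t}m(t)\geq m(s)$ for $s>e$ to reduce the envelope gap to $f_{t,\gamma}(n)$ up to an additive constant, and sum symmetrically about $t/2$. The paper's Lemma~\ref{lem: integer time} is exactly your discrete sum (with prefactor $(1+Y)^2$ rather than $(1+Y)$; both are immaterial for the summation, and the paper's version is directly citable from \cite[Prop.~3]{bramson} without passing through the sharper asymptotic \eqref{to_the_right}).

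The one place where your sketch is thin is precisely the place the paper spends half its effort: the reduction from continuous to integer times. You attribute this to "quasi-stationarity of the recentered extremum," but tightness of the marginal law of $\max_k x_k(s)-m(s)$ does not by itself control the increments of the running-maximum process over unit intervals — that process is not Markov and the union-over-$s$ event is not a single-time event. What actually closes the gap, and what the paper does via the stopping time $\mathcal S$ in \eqref{anton.206}--\eqref{anton.215}, is the \emph{branching Markov property}: conditioning on the first crossing time $\mathcal S=s'$, one follows a single Brownian lineage of the crossing particle over $[s',\lceil s'\rceil]$, a unit-time Brownian increment that drops by more than a constant $C''$ only with probability $\Phi(-C'')$. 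This gives, uniformly in $s'$, a positive lower bound on the conditional probability that $\max_k x_k(\lceil s'\rceil)\geq V(\lceil s'\rceil)-C_0$, whence the continuous-time crossing probability is at most $(1-\Phi(-C''))^{-1}$ times your discrete sum. Your constant $C_0$ suffices for this; the paper instead uses the curve $f_{t,\gamma/2}$, which makes the conditional probability tend to zero with $r$ rather than merely be bounded away from one, but that refinement is not needed. Finally, a small sign slip: $m(n)=\tfrac{n}{t}m(t)-\tfrac{3}{2\sqrt2}h(n)$, not $+$; your subsequent formula $a_n=f_{t,\gamma}(n)+(y-C_0)+\tfrac{3}{2\sqrt2}h(n)$ and the inequality $a_n\geq f_{t,\gamma}(n)+(y-C_0)$ are nevertheless correct.
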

The picture emerging from Theorem \ref{teor_upper_envelope} is depicted in Figure 1. 

\begin{figure}[here] \label{fig:figure.1}
%\begin{center}
  \input{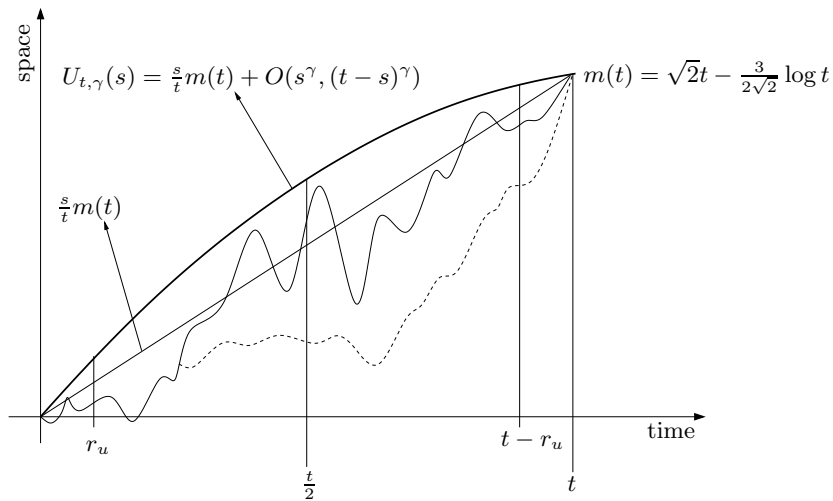}
%\end{center}
\caption{The upper envelope}
\end{figure}
Our choice of the upper envelope is not optimal. 
As was pointed out by an anonymous referee, Lalley \& Sellke \cite{lalley_sellke} have shown that $\sqrt{2} t - \max_{k\leq n(t)} x_k(t) \to +\infty$ almost surely.
(This is essentially a consequence of the convergence of the martingale $Z(t)$.)
In particular, this implies that to given $\epsilon >0$, and $r$ large enough,
$$
\PP\Big[\exists k\leq n(t): x_k(s) > y +m(s) +\frac{3}{2\sqrt{2}}\log s,\;\text{for some}\; s >r \Big]< \epsilon \ .
$$
Since $m(s)\leq \frac{s}{t}m(t)$ (for $s>e$), this is an improvement over \eqref{anton.2}, but only for $s$ not too close to $t$.
(For example, it holds for $s\leq t-\delta(t)$ for $\delta\sim (\log t)^{1/\gamma}$.)
For the later times, of order one away from $t$, a finer argument is needed.
A slight refinement of the approach of Bramson \cite{bramson}  can be used to obtain an envelope where $f$ is of  logarithmic order on the entire interval $[r,t-r]$.
It is based on the simple observation that particles that touch  the upper envelope during the interval $[r,t-r]$ would reach at that time values that are so large that  their {\it offsprings} at time 
$t$ could easily jump  to heights  well above the established value of the maximal displacement. 
We choose the upper envelope $U_{t,\gamma}$ since its form is simple throughout the interval and since it requires relatively little work.
It steadily follows from the estimate of the right tail of the maximal displacement, as obtained by Bramson in  \cite[Prop. 3]{bramson}:
\beq \label{bramson_upper_bound_one}
\PP\left[ \max_{k\leq n(t)} x_k(t) \geq m(t)+Y\right] \leq \kappa (1+Y)^2 \exp\left[-\sqrt{2} Y\right],
\eeq
which is valid for $0<Y<\sqrt{t}$ and where $\kappa>0$ is a numerical constant. (Here and henceforth, we 
denote by $\kappa$ a positive numerical constant, not necessarily the same at different occurrences).

%Our choice of the upper envelope is not optimal. In fact, a slight refinement of the approach used by Bramson \cite{bramson} yields an envelope where $f$ may be chosen of  logarithmic order.
%It is based on the simple observation that
%particles that touch  the upper envelope during the interval $[r,t-r]$ would reach at that time values that are so large that  their {\it offsprings} at time 
%$t$ could easily jump  to heights  well above the established value of the
%maximal displacement. 
%
%The anonymous referee also pointed out that the findings by Lalley \& Sellke \cite{lalley_sellke}, in particular the a.s. convergence $\sqrt{2} t - \max_{k\leq n(t)} x_k(t) \to +\infty$ in the limit of large times, can be used to construct tighter upper envelopes than those of Theorem \ref{teor_upper_envelope}. 
%
%This to say that the construction of the upper envelope is by far not unique. The one we construct here requires relatively little work, and steadily follows from the estimate of the right tail of the maximal displacement, as obtained by Bramson in  \cite[Prop. 3]{bramson}:
%\beq \label{bramson_upper_bound_one}
%\PP\left[ \max_{k\leq n(t)} x_k(t) \geq m(t)+Y\right] \leq \kappa (1+Y)^2 \exp\left[-\sqrt{2} Y\right],
%\eeq
%which is valid for $0<Y<\sqrt{t}$ and where $\kappa>0$ is a numerical constant. (Here and henceforth, we 
%denote by $\kappa$ a positive numerical constant, not necessarily the same at different occurrences).

The construction of an upper envelope is very useful for the results on the genealogies, as we shall briefly elucidate.
Remark first that correlations among particles force the front of BBM to lie lower (by a logarithmic factor) than the one of the REM. This has considerable impact
on the finer properties of BBM. A simple calculation reveals already that something unusual is going on: to leading order in $t$, the mean number of exceedances of a level $x$ is given by
\beq \label{rem_order_one}
\E[ \#\{i\leq n(t): x_i(t)>x\}]\sim\frac{\ee^{t}}{\sqrt{2\pi t}} \exp\left( - \frac{x^2}{2t} \right)
\eeq
Note that this quantity is not sensitive to correlations. For the level of the maximum in the REM, $x= r(t)$, this quantity is of order one, as $t\uparrow \infty$, while at the level of the maximum of BBM, 
$x = m(t)$, it is of order $t$! As a consequence, Theorem \ref{teor_overlaps_extremality} does not follow by a straigthforward application of the Markov inequality and classical Gaussian estimates. Theorem \ref{teor_upper_envelope} is a first, fundamental step to overcome this difficulty. In fact, this theorem tells us that 
one  can additionally require that paths of extremal particles never cross the upper envelope (up to an error which can be made as small as wished). 
Precisely, extremal particles perform Brownian motion starting in zero conditioned to  reach certain values at given times.
This can be reformulated in terms of a Brownian bridge of length $t$ starting at 0 and ending at $m(t)$ (omitting lower orders) 
that is not allowed to cross the upper envelope.
This idea is omnipresent in Bramson's  paper \cite{bramson},
 and is used extensively in the present work.
By the very definition of the upper envelope, the situation is equivalent to a
Brownian bridge (starting and ending at time $t$ at zero) which is not allowed to cross the curve $f_{t,\gamma}$ for most of its lifespan. 
The probability of such an event is inversely proportional to the length of the bridge, and will hence
compensate the extra $t$ factor observed above. (In fact, the probability that the Brownian bridge remains below any concave curve such $f_{t, \gamma}$ is 
of the same order of magnitude as if it is required to stay below a straight positive line, as long as $\gamma < 1/2$.)

A second consequence of Theorem \ref{teor_upper_envelope} which plays a crucial role in the proof of Theorem \ref{teor_overlaps_extremality} is a phenomenon we will refer to, by a slight abuse of 
terminology, as {\it entropic repulsion}.
Due to the strong fluctuations of the unconstrained paths, particles which at some point are close to the line $s\mapsto \frac{s}{t}m(t)$ 
have plenty of chances to hit the upper envelope in the remaining time.
One expects that a natural way to avoid this is for the paths to lie {\it well below} the interpolating line for most of the time. 
(In other words, a typical Brownian bridge that is conditioned to lie below the curve $f_{t,\gamma}$ for most of the interval of time
must lie well below $0$; this is not surprising in view of the fact that the conditioned Brownian bridge 
resembles a Bessel bridge \cite{revuz_yor}).
This turns out to be the case, the upshot being that the upper envelope identified in Theorem \ref{teor_upper_envelope} 
can be replaced by a lower ``entropic envelope'', $E$, under which paths of extremal particles lie with overwhelming probability. 
Such a phenomenon is strongly reminiscent of the entropic repulsion encountered in the statistical mechanics of membrane models, see e.g. Velenik's survey \cite{velenik}. 

To formulate this precisely, we need some notation. With $f$ as in \eqref{anton.1} and $\alpha >0$ we denote by {\it entropic envelope} the curve 
\beq \label{def_entropic_envelope}
E_{t, \alpha}(s) \defi \frac{s}{t}m(t) - f_{t,\alpha}(s).
\eeq
Notice  that $E_{t,\alpha} \ll U_{t,\alpha}$.
\begin{teor}[Entropic Repulsion] \label{teor_entropic_envelope}  
Let $D\subset \R$ be a compact set, and $0< \alpha < 1/2$. Set $\overline{D} \defi \sup\{x\in D\}.$ 
For any $\vare>0$ there exists $r_e=r_e(\alpha,D,\vare)$ such that for $r\geq r_e$ and $t>3r$,
\beq\label{anton.6} \bea
& \PP\Big[\exists k\leq n(t): x_k(t) \in m(t)+D\; \text{but}\; \exists_{ s \in [r, t-r]}: x_k(s) \geq \overline{D}+E_{t, \alpha}(s)\;  \Big] < \vare \ .
\eea\eeq
\end{teor}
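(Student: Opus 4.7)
The plan is to combine Theorem \ref{teor_upper_envelope} with a first-moment computation via the many-to-one lemma, reducing the problem to a sharp Brownian-bridge estimate. First, I would fix $\gamma\in(\alpha,1/2)$ and apply Theorem \ref{teor_upper_envelope} with parameters $(\gamma,\overline{D},\vare/2)$: for $r$ larger than a threshold $r_u$ and $t>3r$, with probability at least $1-\vare/2$ every particle's trajectory satisfies $x_k(s) \leq \overline{D}+U_{t,\gamma}(s)$ on $[r,t-r]$. It then suffices to bound by $\vare/2$ the probability of the event in \eqref{anton.6} intersected with this upper-envelope event. By Markov, this is at most $\E[N]$, where $N$ counts the particles $k$ satisfying simultaneously $x_k(t) \in m(t)+D$, $x_k(s) \leq \overline{D}+U_{t,\gamma}(s)$ on $[r,t-r]$, and $x_k(s) \geq \overline{D}+E_{t,\alpha}(s)$ for some $s\in[r,t-r]$. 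The many-to-one identity of BBM then reduces this to $\E[N] = e^t\PP[\mathcal{E}]$ for the analogous event $\mathcal{E}$ of a single Brownian motion $B$ with $B_0=0$.

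Next, I would condition on $B_t = m(t)+y$ for $y\in D$ and set $W(s):=B(s)-\tfrac{s}{t}(m(t)+y)$, so that $W$ is a standard Brownian bridge of length $t$ from $0$ to $0$. Using $(m(t)+y)^2/(2t) = t-\tfrac{3}{2}\log t+O(1)$, the prefactor $e^t\cdot\PP[B_t\in m(t)+D]$ is of order $t$, so the problem reduces to bounding, by $\eta(r)/t$ with $\eta(r)\downarrow 0$ as $r\to\infty$, the probability of the bridge event: $W(s)\leq f_{t,\gamma}(s)+c$ for all $s\in[r,t-r]$, and $W(s^\ast)\geq -f_{t,\alpha}(s^\ast)-c$ for some $s^\ast\in[r,t-r]$, where $c=c(D)$ is a bounded constant absorbing all $y$-dependence. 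To estimate this event, I would partition $[r,t-r]$ into unit intervals and union-bound over $k\in\{\lfloor r\rfloor,\ldots,\lceil t-r\rceil\}$ with $s^\ast\in[k,k+1]$; for each $k$, the Markov property of the bridge at $k$ and $k+1$ splits $W$ into three conditionally independent sub-bridges on $[0,k]$, $[k,k+1]$, $[k+1,t]$. The outer pieces contribute reflection factors for the constraint of staying below $f_{t,\gamma}+c$, computed via Bramson's reduction of concave barriers to linear ones together with the identity $\PP[\max_{[0,T]}W\leq h\mid W_0=a,W_T=b] = 1-\exp(-2(h-a)(h-b)/T)$, while the middle piece contributes $O(1)$ once $W_k, W_{k+1}\geq -f_{t,\alpha}(k)-c$. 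Integrating in $(W_k,W_{k+1})$ against the bridge Gaussian weight (of variance $\sim k(t-k)/t$) then yields a contribution summable in $k$, and its tails define $\eta(r)$.

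The hard part will be the bridge estimate itself. The upper envelope $f_{t,\gamma}$ is sub-diffusive (of order $s^\gamma\ll\sqrt{s}$) and therefore genuinely binding --- a Brownian bridge conditioned to stay below it is pushed well below zero. The entropic envelope $-f_{t,\alpha}$, however, lies within the range of typical (unconditioned) bridge fluctuations, so reaching above it is rare only in conjunction with the upper-envelope constraint. Making this quantitative will require a delicate Bramson-type ballot inequality for bridges below concave barriers, together with careful bookkeeping of the Gaussian weights at each time scale $k$, to ensure that the summed contributions over the entire interval $[r,t-r]$ yield a tail $\eta(r)$ vanishing as $r\to\infty$.
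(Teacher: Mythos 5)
Your opening reductions (invoking Theorem~\ref{teor_upper_envelope}, then Markov and the many-to-one lemma, then factoring out the density $e^t\PP[x(t)\in m(t)+D]\sim\kappa t$ via the independence of $\mathfrak z_t$ and $x(t)$) coincide with the paper's, and your observation that the remaining bridge probability must be $o(1/t)$ with a rate $\eta(r)\to 0$ is exactly the right target. But the hard core of the argument --- the bridge estimate --- is where you diverge from the paper, and it is also where your plan is on shaky ground.

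Two concrete points. First, you fix $\gamma\in(\alpha,1/2)$, whereas the paper takes $0<\gamma<\alpha<1/2$. The paper's choice is not cosmetic: it is what allows the upper-barrier curve $F(s)=\overline D\tfrac{t-s}{t}+f_{t,\gamma}(s)$ to be sandwiched, $\underline F\le 0\le F\le\overline F$ with $\underline F=\overline D\tfrac{t-s}{t}-f_{t,\alpha}$ and $\overline F=\overline D\tfrac{t-s}{t}+f_{t,\alpha}$, so that Bramson's monotonicity lemma (Lemma~\ref{monotonicity}) applies and reduces everything to the single ratio $P^0[B^{\underline F}]/P^0[B^{\overline F}]$. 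With $\gamma>\alpha$ that sandwich fails, and in your union bound the window at time $k$ has width of order $k^\gamma$ rather than $k^\alpha$, which is strictly worse. Second, and more seriously, the paper does not prove the required estimate by a unit-interval union bound at all. It rewrites the excess as $tP^0[B^0]\cdot\frac{P^0[B^F]}{P^0[B^0]}\bigl(1-\frac{P^0[B^{\underline F}]}{P^0[B^F]}\bigr)$, applies Lemma~\ref{monotonicity}, and then compares $P^0[B^{\underline F}]$ to $P^0[B^{\overline F}]$ by a Cameron--Martin/Girsanov change of measure that deforms the curve $-f+f_{t,\alpha}$ into $-f-f_{t,\alpha}$, restricted to the high-probability event $\{\mathfrak z<\Lambda_t\}$ with $\Lambda_t(s)\sim s^\theta$, $\theta>1/2$; on this event the Radon--Nikodym factor is controlled by integration by parts, and Lemma~\ref{more_than_fluctuations} shows the restriction costs only a super-polynomially small amount. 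This machinery --- Girsanov plus Bramson's Lemma~2.7 --- is precisely what handles the fact that the prefactor $tP^0[B^0[r,t-r]]$ is of order $r$ (not $O(1)$; cf.\ \eqref{exploding_t_r}), so the ratio term has to beat $1/r$. Your proposal replaces this with a union bound over $k$ and a reflection/ballot computation, and you concede that the summability is the ``hard part.'' That concession is well placed: the natural per-$k$ estimate (Bessel-bridge density $\propto y^2$ near the barrier, integrated over a window of width $k^{\alpha\wedge\gamma}$) gives a contribution on the order of $k^{3(\alpha\wedge\gamma)-3/2}$, whose tail sum starting at $r$ is only polynomially small in $r$, and it is not evident that this beats the $O(r)$ prefactor without an additional idea akin to the paper's change-of-measure argument. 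In short, the proposal correctly identifies the reduction but does not supply, even in outline, the mechanism that makes the sharp bridge estimate work; the paper's mechanism is genuinely different from a union bound, and you would need to either reproduce it or find a substitute of comparable strength.
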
 
The mechanism of entropic repulsion described in Theorem \ref{teor_entropic_envelope} is depicted in Figure 2 below. 
Remark that such path-localizations evidently cannot hold true for times which are close to $0$ or $t$, and this is the reason why ``very old'' resp. ``very recent'' ancestries are indeed not only possible, but, as we believe (see Section \ref{towards_extremal_process} below) also crucial for the peculiar properties of the extremal process of BBM. 

\begin{figure}[here] \label{fig:figure.2}
 \input{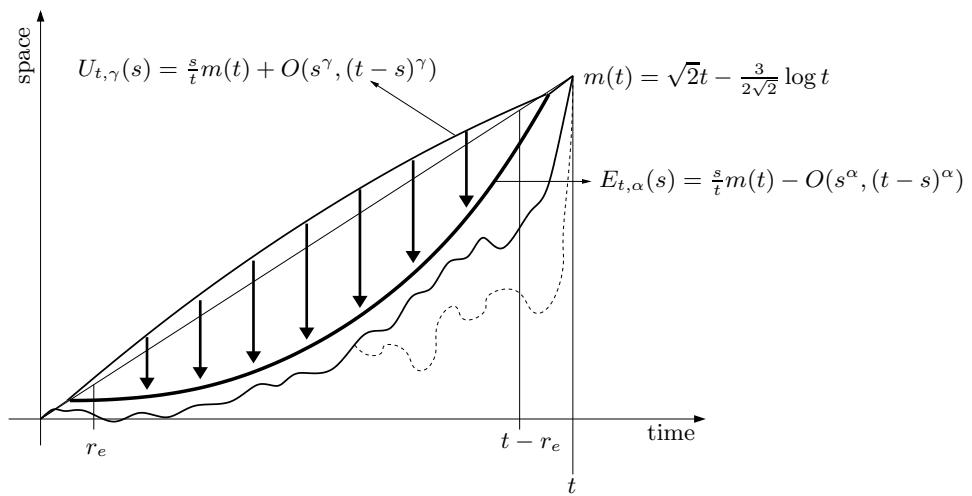}
\caption{Entropic repulsion}
\end{figure}

\begin{rem} 
Energy/entropy considerations provide a straightforward explanation of the mechanism underlying Theorem \ref{teor_entropic_envelope}: 
at any given time $s \in [r, t-r]$ (for $r$ large enough but finite), there are simply not enough particles at heights
$\geq \frac{s}{t} m(t) - f_{t, \alpha}(s)$ for their offsprings to be able to make large jumps allowing them to reach at time $t$ the edge.
\end{rem}

The entropic repulsion is a crucial ingredient in the proof of Theorem \ref{teor_overlaps_extremality}. In fact, consider two extremal particles, say $i$ and $j$, that reach, at time $t$, heights of about $m(t)$, and assume that the common ancestor of these particles branched at times well inside the interval $[0,t]$: for concreteness, assume that $Q_{t}(i,j) = t/3$. By Theorem \ref{teor_entropic_envelope}, the common ancestor of the particles at time $t/3$ lies at heights of order (at most) $\sqrt{2}(t/3) - (t/3)^\alpha$ for some $0<\alpha<1/2$, omitting logarithmic corrections. In order for a descendant, say particle $i$, to be on the edge at time $t$, the ancestor must thus produce a random tree of length $(2/3)t$ where at least one particle makes the unusually high jump $\sqrt{2}(2/3)t+ (t/3)^\alpha$. One can easily check that this is indeed possible: there are to leading order $\exp\left(+ \sqrt{2} (t/3)^\alpha \right)$ particles at levels $\sqrt{2}(t/3) - (t/3)^\alpha$, and the probability of such a big jump in the remaining time interval is of order $\exp\left(-\sqrt{2} (t/3)^\alpha \right)$, the product being thus of order one.  But to have the particle $j$ reach the same levels and overlapping for $t/3$ of its lifetime with $i$ amounts to finding within the same tree of length $(2/3)t$ yet a second particle which makes the unusually high jump. The probability of finding such two particles is to leading order at most $\exp\left(+ \sqrt{2} (t/3)^\alpha \right)  \exp\left(- 2 \sqrt{2} (t/3)^\alpha \right)$, which is vanishing in the limit of large times. Of course, this is valid only to leading order and for a fixed value of the overlap, the nature of the continuous branching compounding the difficulties, but the reasoning is in its essence correct. In fact, for  technical reasons, we need yet another piece of information about
paths of extremal particles: namely that they cannot lie too low, which is again to be expected from an energy/entropy perspective.

\begin{teor}[Lower Envelope] \label{teor_lower_envelope}
Let $D\subset \R$ be a compact set, and $1/2< \beta < 1$. Set $\overline{D} \defi \sup\{x\in D\}$. 
For any $\vare>0$ there exists $r_l= r_l(\beta,D,\vare)$ such that for $r\geq r_l$ and $t>3r$, 
\beq\label{anton.401} \bea
& \PP\Big[\exists k\leq n(t):\, x_k(t)\in m(t)+D, \; \text{but} \exists_{s \in [r,t-r]}:  x_k(s) \leq 
\overline{D}+ E_{t, \beta}(s) \Big] <\vare . 
\eea
\eeq
\end{teor}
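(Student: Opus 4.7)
My plan is to mirror the proof of Theorem~\ref{teor_entropic_envelope}, replacing upward deviations by downward ones; the condition $\beta>1/2$ plays the symmetric role to $\alpha<1/2$. First I would fix $\gamma\in(0,1/2)$ and invoke Theorem~\ref{teor_upper_envelope} at tolerance $\vare/2$ to restrict attention to particles whose paths additionally satisfy $x_k(u)\leq \overline{D}+U_{t,\gamma}(u)$ throughout $[r,t-r]$. Next I would discretize $[r,t-r]$ by a sufficiently fine grid $\{s_i\}$ so that, by the Brownian modulus of continuity applied to the $O(t)$ paths of extremal particles (a union bound contributing a further small error), it suffices to control the discrete event ``$x_k(s_i)\leq \overline{D}+E_{t,\beta}(s_i)+O(1)$ for some $s_i$''. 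Markov's inequality then reduces the problem to bounding $\sum_{s_i}\E[N_{s_i}]$, where $N_{s_i}$ counts the bad particles at time $s_i$.

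The second step is to evaluate $\E[N_{s_i}]$ via the many-to-one lemma, giving
\[
\E[N_{s_i}]=e^t\,\PP\bigl[B_t\in m(t)+D,\; B_{s_i}\leq \overline{D}+E_{t,\beta}(s_i)+O(1),\; B_u\leq \overline{D}+U_{t,\gamma}(u)\;\forall u\in[r,t-r]\bigr].
\]
Factoring out the Gaussian density at the endpoint produces the familiar prefactor $e^tp_t(m(t)+d)\asymp t$, and substituting $\tilde B_u\defi B_u-\tfrac{u}{t}(m(t)+d)$ leaves a standard Brownian bridge of length $t$ from $0$ to $0$ subject to the two constraints
\[
\tilde B_{s_i}\leq -f_{t,\beta}(s_i)+O_D(1),\qquad \tilde B_u\leq f_{t,\gamma}(u)+O_D(1)\quad\forall u\in[r,t-r].
\]
Conditioning on $\tilde B_{s_i}=z\leq -f_{t,\beta}(s_i)+O_D(1)$ and exploiting the Markov property of the bridge, the Gaussian density of $\tilde B_{s_i}$ contributes a factor $e^{-f_{t,\beta}(s_i)^2/(2\sigma_{s_i}^2)}\leq e^{-\min(s_i,t-s_i)^{2\beta-1}/2}$, where $\sigma_{s_i}^2=s_i(t-s_i)/t$ and the decay uses $\beta>1/2$; meanwhile the two resulting sub-bridges on $[0,s_i]$ and $[s_i,t]$, each forced to remain below the concave envelope $f_{t,\gamma}$, contribute an extra $O(1/t)$ via a Bramson-type ballot estimate (cf.~\cite{bramson_monograph}). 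This $1/t$ cancels the $t$ coming from the Bramson correction and yields
\[
\E[N_{s_i}]\leq C_D\cdot\mathrm{poly}\!\bigl(\min(s_i,t-s_i)\bigr)\cdot\exp\!\Bigl(-\tfrac{1}{2}\min(s_i,t-s_i)^{2\beta-1}\Bigr),
\]
which is super-polynomially summable around $t/2$, and by symmetry $\sum_{s_i}\E[N_{s_i}]\to 0$ as $r\to\infty$ uniformly in $t>3r$.

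The main obstacle will be the bridge probability estimate: one must sharply quantify the cost of forcing a Brownian bridge to dip to a very negative value at a prescribed intermediate time while also remaining below the concave upper envelope. The upper envelope input is what delivers the decisive $1/t$ factor that cancels the Bramson logarithmic correction; without it, a bare Markov argument would face an uncontrolled factor $t$ coming from the expected number of particles at the edge. The proof of the ballot estimate exploits the Bessel-bridge-like behaviour of Brownian bridges conditioned to stay below positive concave curves (as alluded to in the remarks following Theorem~\ref{teor_entropic_envelope}), together with classical hitting estimates for bridges against curved barriers of the type developed in Bramson's monograph.
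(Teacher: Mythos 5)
Your proposal takes a genuinely different route from the paper. The paper first replaces the upper envelope by the \emph{entropic} envelope (Theorem~\ref{teor_entropic_envelope}), so that after factoring out $x(t)$ the Brownian bridge is constrained below a curve that is $\leq 0$ on $[r,t-r]$ for $r$ large; by the symmetry of the bridge about the $x$-axis this becomes a bridge conditioned to lie \emph{above} $0$, and Lemma~\ref{more_than_fluctuations} together with Lemma~\ref{lem: BB estimate} and the monotonicity Lemma~\ref{monotonicity} then give the bound $\kappa r\sum_{k\geq r}k\,e^{-k^\delta}$, with no discretization and no conditioning at an intermediate time. Your scheme (upper envelope only, time-discretization, condition on $\tilde B_{s_i}$, Gaussian density decay plus a sub-bridge ballot estimate) is a legitimate alternative in spirit, and the eventual summand $\mathrm{poly}\cdot e^{-\min(s_i,t-s_i)^{2\beta-1}/2}$ would indeed be summable, but two of your steps do not go through as stated.

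The discretization step is the main gap. You reduce to the event $x_k(s_i)\leq \overline{D}+E_{t,\beta}(s_i)+O(1)$ by a union bound over ``the $O(t)$ paths of extremal particles'' against the modulus of continuity on an $O(1)$ mesh. But over $[r,t-r]$ there are $O(t)$ mesh intervals, and the first-moment/union bound you would actually run (via many-to-one) gives, for a threshold $M$, a failure probability of order $t^2 e^{-M^2/2}$; thus $M$ must grow like $\sqrt{\log t}$, not $O(1)$. This wrecks the decay precisely where it matters: for $s_i$ near $r$ (or $t-r$) the barrier $f_{t,\beta}(s_i)\approx r^\beta$ is a fixed number once $r$ is chosen, so for $t$ large the correction $\sqrt{\log t}$ eventually swamps $r^\beta$, and the Gaussian density factor $e^{-(f_{t,\beta}(s_i)-O(\sqrt{\log t}))^2/(2\sigma_{s_i}^2)}$ gives no decay. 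Refining the mesh to $1/t$ removes the $\sqrt{\log t}$ but multiplies the number of terms by $t$, which again destroys the bound; there is no mesh scale at which both effects are controlled, and you would be forced into a continuous-time/stopping-time formulation — at which point you are essentially back to the paper's scheme. A secondary, fixable imprecision: the sub-bridge ballot factor is not a clean $O(1/t)$ but rather of order $|z|^2/\bigl(s_i(t-s_i)\bigr)$ (from $(c(0)-a)(c(T)-b)/T$ on each sub-interval), where $|z|$ is the dip depth. The $|z|^2$ must be integrated against the Gaussian density of $\tilde B_{s_i}$; the conclusion survives, but ``an extra $O(1/t)$'' is not the right intermediate statement. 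Also, justifying the ballot estimate under the concave barrier $f_{t,\gamma}$ (rather than a constant) requires something like Lemma~\ref{more_than_fluctuations}, which you invoke only implicitly.
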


Theorems \ref{teor_entropic_envelope} and \ref{teor_lower_envelope} are proven
 in Section \ref{sec_localization_paths}. The two theorems provide 
 an explicitly characterized tube, the space-time region between lower and entropic envelopes, where paths of extremal particles 
spend most of their time with overwhelming probability. 
\begin{cor}[The "tube"]
\label{cor: tube}
Let $D\subset \R$ be a compact set. 
Let $0<\alpha<1/2<\beta<1$.
For any $\vare >0$
there exists $r_1=r_1(\alpha,\beta,D,\vare)$ such that for $r\geq r_1$ and $t>3r$,
\beq \label{eqn: claim}
\bea
&\PP\Big[\forall k\leq n(t)\; \text{such that} \; x_k(t)\in m(t)+D, \; \\
& \hspace{2.5cm} \overline{D}+ E_{t, \beta}(s) \leq x_k(s) \leq  \overline{D} +  E_{t, \alpha}(s) \; \forall_{s \in [r,t-r]}
\Big] \geq 1-\vare \ .
\eea
\eeq
\end{cor}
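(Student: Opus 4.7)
\textbf{Proof plan for Corollary \ref{cor: tube}.} The statement is a direct conjunction of the upper (entropic) bound from Theorem \ref{teor_entropic_envelope} and the lower bound from Theorem \ref{teor_lower_envelope}, so the plan is simply a union bound on the two bad events. Concretely, given $\vare>0$, $D$ and exponents $0<\alpha<1/2<\beta<1$, the approach is: apply Theorem \ref{teor_entropic_envelope} with tolerance $\vare/2$ to obtain a threshold $r_e = r_e(\alpha, D, \vare/2)$, apply Theorem \ref{teor_lower_envelope} with tolerance $\vare/2$ to obtain $r_l = r_l(\beta, D, \vare/2)$, and then set
\[
r_1 \defi \max\bigl(r_e, r_l, 1\bigr).
\]
The final $1$ in the maximum ensures that the tube is nonempty: for $s\in [r,t-r]$ with $r\geq 1$ one has $s\geq 1$ and $t-s\geq 1$, hence $s^\alpha\leq s^\beta$ and $(t-s)^\alpha\leq (t-s)^\beta$, giving $f_{t,\alpha}(s)\leq f_{t,\beta}(s)$ and therefore $E_{t,\beta}(s)\leq E_{t,\alpha}(s)$ throughout $[r,t-r]$.

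Next I would write the complement of the event in \eqref{eqn: claim} as the union of
\[
A_t \defi \Bigl\{\exists k\leq n(t):\ x_k(t)\in m(t)+D,\ \exists s\in[r,t-r]:\ x_k(s) > \overline{D}+E_{t,\alpha}(s)\Bigr\}
\]
and
\[
B_t \defi \Bigl\{\exists k\leq n(t):\ x_k(t)\in m(t)+D,\ \exists s\in[r,t-r]:\ x_k(s) < \overline{D}+E_{t,\beta}(s)\Bigr\}.
\]
Theorem \ref{teor_entropic_envelope} gives $\PP[A_t]<\vare/2$ for $r\geq r_e$ and $t>3r$ (the strict inequality $>$ in $A_t$ is only weaker than the $\geq$ in \eqref{anton.6}, so the same bound applies a fortiori). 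Similarly, Theorem \ref{teor_lower_envelope} yields $\PP[B_t]<\vare/2$ for $r\geq r_l$ and $t>3r$. A union bound then gives $\PP[A_t\cup B_t]<\vare$, which is exactly the desired conclusion once one takes complements.

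There is essentially no analytic obstacle here; the only thing to keep track of is the bookkeeping to check that both theorems are being invoked on the same collection of extremal particles (those reaching $m(t)+D$) and on the same time window $[r,t-r]$, and that the strict vs.\ non-strict inequalities in the quoted theorems are consistent with the weak inequalities in the statement of the corollary. Both verifications are immediate, so the whole proof reduces to the displayed union bound once $r_1$ is chosen as above.
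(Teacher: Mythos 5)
Your proof is correct and takes exactly the same route as the paper: a union bound over the two bad events controlled by Theorems \ref{teor_entropic_envelope} and \ref{teor_lower_envelope}, with $r_1$ taken as the maximum of the two thresholds. The paper states this in one line, so your write-up is simply a more detailed account of the same argument.
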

The proof of the assertion is straightforward from Theorems \ref{teor_entropic_envelope} and \ref{teor_lower_envelope}
taking $$r_1(\alpha,\beta,D,2\vare)=\max\{ r_e(\alpha,D,\vare); r_l(\beta,D,\vare)\}.$$
The image which emerges from the Corollary is depicted in Figure 3 below. 

\begin{figure}[ht] \label{fig:figure.3}
 \input{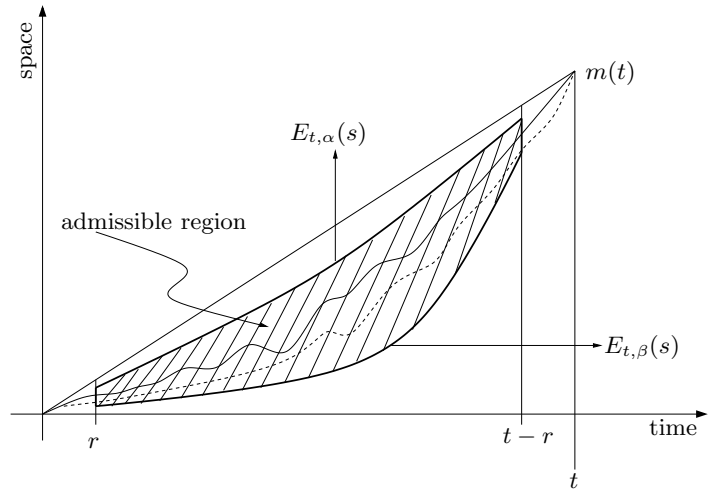}
\caption{The "tube"}
\end{figure}

\subsection{Towards the extremal process of Branching Brownian Motion} \label{towards_extremal_process}

As mentioned in the introduction, it is not known at the time of the writing whether the extremal process 
$\mathcal{N}_t$ of BBM converges as $t\to\infty$ to a well defined point process. 
On the other hand, the tightness of the family $(\mathcal{N}_t)_{t\geq 0}$ is not hard to establish.
\begin{prop}[Local Finiteness]
\label{loc_finite}
For any $y\in\R$ and $\vare>0$, there exists $N=N(\vare, y)$ and $t_0=t_0(\vare,y)$ 
such that for $t\geq t_0$,
$$
\PP[\mathcal{N}_t[y,\infty)\geq N]<\vare\ .
$$
\end{prop}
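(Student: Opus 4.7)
The plan is to bound $\mathcal N_t[y,\infty)$ from above by the count in a bounded window $[y,y+K)$ plus an indicator that the maximum is exceptionally large, and to control the former by Markov's inequality applied to a \emph{restricted} first moment that takes into account the upper envelope of Theorem \ref{teor_upper_envelope}. The raw first moment of $\mathcal N_t[y,y+K)$ is of order $t$ -- this is precisely the ``extra factor $t$'' highlighted in \eqref{rem_order_one} -- but a Brownian-bridge ballot estimate produces a compensating $1/t$ once we insist that the contributing paths stay under the envelope.

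\textbf{Step 1 (truncation of the high tail).} For any $K>0$,
\[
\{\mathcal N_t[y,\infty)\geq N\}\subseteq \{\mathcal N_t[y,y+K)\geq N\}\cup\{\max_{k\leq n(t)}x_k(t)\geq m(t)+y+K\}.
\]
By \eqref{bramson_upper_bound_one}, choose $K=K(y,\vare)$ so large that $\kappa(1+y+K)^2 e^{-\sqrt 2(y+K)}<\vare/3$; then the second event above has probability $<\vare/3$ for all $t$ with $y+K<\sqrt t$. Next, fix $\gamma\in(0,1/2)$ and apply Theorem \ref{teor_upper_envelope} with parameters $(\gamma,y+K,\vare/3)$ to pick $r=r_u(\gamma,y+K,\vare/3)$ so that the event
\[
A_t\defi\bigl\{x_k(s)\leq (y+K)+U_{t,\gamma}(s)\ \forall k\leq n(t),\ \forall s\in[r,t-r]\bigr\}
\]
satisfies $\PP[A_t^c]<\vare/3$ whenever $t>3r$.

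\textbf{Step 2 (restricted first moment via many-to-one).} On $A_t$, every particle contributing to $\mathcal N_t[y,y+K)$ keeps its path below $(y+K)+U_{t,\gamma}$ on $[r,t-r]$. Let $M_t$ denote the number of such particles; Markov's inequality reduces matters to a bound on $\E[M_t]$, which by the many-to-one lemma equals $e^t\,\PP[\mathcal E_t]$ with
\[
\mathcal E_t\defi\{B_t\in m(t)+[y,y+K),\ B_s\leq (y+K)+U_{t,\gamma}(s)\ \forall s\in[r,t-r]\},
\]
$(B_s)_{s\geq 0}$ a standard Brownian motion. Conditioning on $B_t=x$ and writing $B_s=(s/t)x+W_s$ with $W$ a standard Brownian bridge on $[0,t]$, the path constraint becomes $W_s\leq h(s)$ on $[r,t-r]$ for the concave function
\[
h(s)\defi (y+K)-(s/t)(x-m(t))+f_{t,\gamma}(s),
\]
whose endpoint values $h(r)$ and $h(t-r)$ are of order $r^\gamma$, and are positive once $r$ is enlarged so that $r^\gamma>|y|+K$.

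\textbf{Step 3 (ballot bound and Gaussian bookkeeping).} A standard ballot estimate for a Brownian bridge under a positive concave barrier yields
\[
\PP\bigl[W\leq h\ \text{on}\ [r,t-r]\ \bigl|\ B_t=x\bigr]\leq \frac{C\,h(r)\,h(t-r)}{t}\leq \frac{C'\,r^{2\gamma}}{t},
\]
uniformly in $x\in m(t)+[y,y+K)$. Using the expansion $m(t)^2/(2t)=t-\tfrac{3}{2}\log t+o(1)$ together with $m(t)/t\to\sqrt 2$, the Gaussian density satisfies $e^{-(m(t)+\xi)^2/(2t)}/\sqrt{2\pi t}\sim t\,e^{-t}\,e^{-\sqrt 2\,\xi}/\sqrt{2\pi}$ for $\xi$ bounded, so
\[
\E[M_t]\leq \frac{C'r^{2\gamma}}{t}\cdot e^t\int_y^{y+K}\frac{e^{-(m(t)+\xi)^2/(2t)}}{\sqrt{2\pi t}}\,d\xi\leq C''(y,K,\gamma,r)\int_y^{y+K} e^{-\sqrt 2\,\xi}\,d\xi,
\]
which is $O(1)$ uniformly in $t\geq t_0$. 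Choosing $N=N(\vare,y)\defi\lceil 3C''/\vare\rceil$ gives $\PP[M_t\geq N]<\vare/3$, and combining with the two $\vare/3$ contributions from Step~1 yields $\PP[\mathcal N_t[y,\infty)\geq N]<\vare$.

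\textbf{Main obstacle.} The only substantive ingredient is the ballot-type estimate in Step~3: a Brownian bridge forced to stay under a positive concave barrier of height $O(r^\gamma)$ at both endpoints of an interval of length $\sim t$ must pay a probability cost of order $r^{2\gamma}/t$. This is precisely the compensating $1/t$ that annihilates the spurious factor of $t$ in the unrestricted first moment \eqref{rem_order_one}; it is the same kind of bridge estimate developed by Bramson \cite{bramson, bramson_monograph} en route to the logarithmic correction in $m(t)$, and everything else reduces to the many-to-one lemma plus Gaussian tail arithmetic.
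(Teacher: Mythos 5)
Your strategy --- truncate the high tail with Bramson's right-tail estimate, pin the remaining paths under the upper envelope of Theorem~\ref{teor_upper_envelope}, and control the restricted first moment via a ballot-type estimate --- is a genuinely different route from the paper's. The paper proves local finiteness by a short soft contradiction argument: if $\mathcal{N}_{t_N}[y,\infty)$ were unbounded along a sequence $t_N$, the Markov property applied one time unit later would force the recentered maximum to exceed any fixed level with probability bounded away from zero, contradicting the convergence in \eqref{traveling_one}. That argument uses no envelope or bridge machinery. Your approach is exactly what the paper alludes to in the sentence preceding its proof ("It is possible to prove the proposition using the localization of the paths\ldots"), and it would give an explicit first-moment bound on $N$, but the key estimate is incorrect as stated.

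The gap is in Step~3. The barrier $h(s)=(y+K)-\frac{s}{t}(x-m(t))+f_{t,\gamma}(s)$ is \emph{concave}, so on $[r,t-r]$ it lies \emph{above} the chord joining $(r,h(r))$ to $(t-r,h(t-r))$. Consequently $\{W\leq \text{chord}\}\subseteq\{W\leq h\}$: the linear ballot estimate yields a \emph{lower} bound for $\PP\bigl[W_s\leq h(s)\ \forall s\in[r,t-r]\bigr]$, not the claimed upper bound. Replacing $h$ by a linear majorant (a tangent line) gives only bounds of order $t^{2\gamma-1}$ (tangent at the midpoint) or $r^{2\gamma-1}$ (tangent at an endpoint), neither of which is $O(1/t)$, so the extra factor of $t$ in \eqref{rem_order_one} is not killed and $\E[M_t]$ is not shown to be bounded. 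Proving that the constrained probability is in fact $O(r/t)$ when $\gamma<1/2$ is precisely the nontrivial content behind the paper's Theorems~\ref{teor_entropic_envelope}--\ref{teor_lower_envelope}: one needs either the Girsanov tilt of Section~\ref{sec_entropic_repulsion}, which converts the concave perturbation $f_{t,\gamma}$ into a uniformly bounded change of measure, or Bramson's comparison lemmas (Lemmas~\ref{monotonicity}--\ref{more_than_fluctuations}). A secondary inaccuracy: even granting the comparison, the bound $C\,h(r)h(t-r)/t\leq C'r^{2\gamma}/t$ omits the Gaussian spread of the unpinned bridge at times $r$ and $t-r$; Lemma~\ref{lem: BB estimate} shows the correct form is $\frac{2}{t-2r}\bigl(h(r)+\sqrt r\bigr)\bigl(h(t-r)+\sqrt r\bigr)$, which for $\gamma<1/2$ is of order $r/t$, not $r^{2\gamma}/t$. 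This second point would not break the argument for fixed $r$, but the concavity issue means that what you label a "standard ballot estimate" is in fact the hardest technical step of the whole localization analysis, and it must be supplied rather than cited.
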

It is possible to prove the proposition using the localization of the paths described in the above section.
We present here a simpler and more robust proof which has been suggested by an anonymous referee.

\begin{proof}
Suppose the assertion does not hold. Then it is possible to find $y\in \R$, $\epsilon>0$ and 
a sequence of times $(t_N)$ such that
\begin{equation}
\label{eqn: finite1}
\PP[\mathcal{N}_{t_N}[y,\infty)\geq N]\geq \vare ~ \text{ , uniformly in $N$.}
\end{equation}

On the other hand, by the convergence of the law of the maximum \eqref{traveling_one}, for any $\delta>0$, we can find $a_\delta\in \R$
independently of $t$ such that
$$
\lim_{t\to\infty} \PP[\max_k x_k(t)\leq m(t) +a_\delta]\geq 1-\delta\ .
$$
(For example, take $a_\delta$ such that $\omega(a_\delta)=1-\delta$.)

Now pick $\delta=\vare/2$, where $\vare$ is as in \eqref{eqn: finite1}.
Then it must be that, for $N$ large enough, the events
$$
\{\mathcal{N}_{t_N}[y,\infty)\geq N\} \text{ and } \{\max_k x_k(t_N+1)\leq m(t_N+1) +a_{\vare/2}\}
$$
have a non-trivial intersection whose probability is bounded below by $\vare/2$ uniformly in $N$.
We show that this is impossible.
The intersection is included in the event that $N$ particles above $y$ at time $t_N$ produces
an offspring at time $t_N+1$ whose maximum is smaller or equal to $m(t_N+1) +a_{\vare/2}$.
In particular, using the Markov property at time $t_N$,
$$
\bea
&\PP\big[\{\mathcal{N}_{t_N}[y,\infty)\geq N\} \cap \{\max_k x_k(t_N+1)\leq m(t_N+1) +a_{\vare/2}\}\big]  \\
&\leq \prod_{j=1}^N \PP\Big[x_j(t_N)+ \max_{k=1,...,n^j(1)} x_k^{(j)}(1)\leq m(t_N+1) +a_{\vare/2}~\Big|~ x_j(t_N)-m(t_N)\geq y \Big]  \\
&\leq  \Big\{\PP\Big[ \max_{k=1,...,n(1)} x_k(1)\leq m(t_N+1)-m(t_N) -y +a_{\vare/2}\Big]\Big\}^N\ ,
\eea
$$
where $x^{(j)}(1)$, $j=1,...,N$, are iid BBM's at time $1$ with offspring of size $n^j(1)$.
Now since 
$
m(t_N+1)-m(t_N)\leq \sqrt{2}\ ,
$
one has
$$
\bea
\PP\big[\{\mathcal{N}_{t_N}[y,\infty)\geq N\} \cap\max_k x_k(t_N+1)\leq m(t_N+1) +a_{\vare/2}\}\big]\\
\leq  \Big(\PP\Big[ \max_{k=1,...,n(1)} x_k(1)\leq \sqrt{2} - y + a_{\vare/2} \Big]\Big)^N\ .
\eea
$$
The left-hand side goes to zero as $N$ tends to infinity, thereby deriving the contradiction.
\end{proof}

Recent findings by Brunet \& Derrida \cite{derrida_brunet}, largely based on numerical studies, suggest that the limiting extremal process of BBM is an object with striking properties. First, they show evidence that the statistics of the gaps between the leading particles differ from those of the Derrida-Ruelle cascades  \cite{ruelle}, that are known to be the limiting extremal process of the GREM processes, as shown by Bovier \& Kurkova \cite{BovierKurkova}. 
The statistics of the gaps of the cascades are the same as those of Poisson point processes with exponential density.
For instance, the expected distance $D_{t}(n,n+1)$ between the $n^{th}$ and $(n+1)^{th}$ particle is proportional to $1/n$.
For the extremal process of BBM, Brunet \& Derrida argue that  
\beq \label{derrida_brunet_result}
\lim_{t\to \infty}  D_t(n, n+1) \simeq \frac{1}{n} - \frac{1}{n \log n} + \dots,
\eeq
Particles at the edge of BBM are thus more densely packed than in Poisson processes of exponential density; this in particular entails
that the statistics of the particles at the edge of BBM cannot be
given by {\it any} of the cascades. 

Brunet \& Derrida  also  conjecture that the extremal process of BBM retains properties of Poisson point processes with exponential density, namely  the {\it invariance under superposition}: the collection of a finite number of i.i.d. copies of the process, with possibly relative shifts, has the same law for the gaps as the process itself. 

Although we cannot prove {\it any} of the Brunet-Derrida conjectures, our results let them appear rather natural. In fact, 
Theorem \ref{teor_overlaps_extremality} suggests the following picture for the extremal process of BBM, which is depicted in Figure 4 below.

First, the result does not rule out ancestry in the interval $[0, r]$ (in the limit of large times and for large enough $r$): this ``free evolution'' seems to 
naturally generate the derivative martingale appearing in the work of Lalley \& Sellke's \cite{lalley_sellke}. 

Second, ancestry over the period $[t-r, t]$ being also allowed, it is obvious that  {\it small grapes} of length at most $r = O(1)$ (for $t\to \infty$), i.e.
clusters of particles with very recent common ancestor,  appear at the end of the time-interval. This suggests that  particles at the edge of BBM should be
more densely packed than in the  REM case, in agreement with \eqref{derrida_brunet_result}. 

Finally, since the ancestors of the extremal particles evolved independently for ``most'' of the time (in the interval $[r,t-r]$), the extremal process must  exhibit a structure similar to Poisson process with exponential density: this makes plausible the invariance under superposition of the law of the gaps proposed by Brunet \& Derrida.

\begin{figure}[ht] \label{fig:figure.4}
 \input{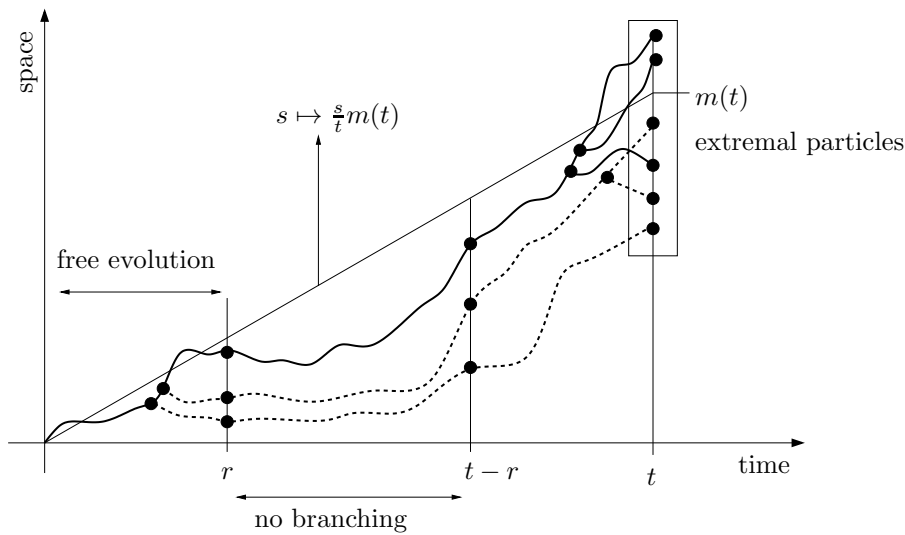}
\caption{Evolution of the system}
\end{figure}

In other words, the limiting extremal process of BBM seems to be given by a certain randomly shifted cluster point process. A rigorous analysis is however technically quite demanding, as it must take into account the self-similarity of BBM which is responsible (in particular) for the onset of the small branches. Unfortunately, this issue is still elusive.

A less ambitious but still interesting goal would be to unravel the poissonian structure which evidently hides behind the limiting extremal process. 
We plan to report on this in future work.
%In other words, the limiting extremal process of BBM seems to be given by a certain \emph{randomly shifted cluster point process}. Unfortunately, the rigorous 
%analysis is technically quite demanding, as it must take into account the self-similarity of BBM which in particular is responsible for the onset of the small branches. We leave this to future work. 

\section{Some properties of Brownian bridge} \label{sec_bb_properties}
We set here some notation and collect useful facts concerning Brownian bridges,
most of which are taken from Bramson \cite[Sect. 2]{bramson_monograph}.

If we denote by $\{x(s), s\geq 0\}$ a standard Brownian motion, it is well known that 
\beq\label{anton.8}
\mathfrak z_t(s) \defi x(s) -\frac{s}{t} x(t), \quad 0\leq s \leq t,
\eeq
defines a new Gaussian process starting and ending at time $t$ in zero, the Brownian bridge.  
We will always assume 
that both $x$ and $\mathfrak z_t$ are defined on $C([0,t], \mathcal B)$, the space of continuous functions 
on $[0,t]$ endowed with its Borel $\s-$algebra. 
We denote by $P^0$ the corresponding measure of $\mathfrak{z_t}$ on $C([0,t], \mathcal B)$.

\begin{lem} \label{prop_bbridge} The Brownian bridge $\mathfrak z_t(s) = x(s)-\frac{s}{t} x(t)$ has the following properties:
\begin{itemize}
\item[1.] $\mathfrak z_t (s)$ is a strong Markov process. 
\item[2.] $\mathfrak z_t(s)$ for $0\leq s \leq t$ is independent of $x(t)$.
\end{itemize}
\end{lem}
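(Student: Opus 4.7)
The plan is to treat the two parts separately, exploiting that everything in sight is jointly Gaussian.

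For part 2 (independence from $x(t)$), I first note that the family $\{\mathfrak{z}_t(s): 0\leq s\leq t\}\cup\{x(t)\}$ is jointly Gaussian, being a collection of linear functionals of the Gaussian process $x$. I then compute, for every $s\in[0,t]$,
$$
\text{Cov}\bigl(\mathfrak{z}_t(s),x(t)\bigr)=\text{Cov}\bigl(x(s),x(t)\bigr)-\frac{s}{t}\,\text{Var}(x(t))=s-\frac{s}{t}\cdot t=0.
$$
Since in a jointly Gaussian family orthogonality is equivalent to independence, the entire process $\mathfrak{z}_t$ is independent of $x(t)$.

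For part 1 I would start by establishing the ordinary Markov property through an explicit decomposition. Fix $0\leq r<t$. For $s\in[r,t]$ I write
$$
\mathfrak{z}_t(s)=\frac{t-s}{t-r}\,\mathfrak{z}_t(r)+\eta(s),\qquad \eta(s):=\mathfrak{z}_t(s)-\frac{t-s}{t-r}\,\mathfrak{z}_t(r),
$$
and verify by a straightforward covariance computation that $\{\eta(s):r\leq s\leq t\}$ is a centered Gaussian process with the covariance of a Brownian bridge over $[r,t]$ pinned at $0$ at both endpoints, and that $\text{Cov}(\eta(s),\mathfrak{z}_t(u))=0$ for all $u\leq r\leq s$. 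Joint Gaussianity then yields independence of $\eta$ from $\sigma(\mathfrak{z}_t(u):0\leq u\leq r)$. This displays the conditional law of $\{\mathfrak{z}_t(s):s\geq r\}$ given the past up to $r$ as a function of $\mathfrak{z}_t(r)$ alone, with an explicit Gaussian transition kernel $p_{r,s}(a,\cdot)$.

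To upgrade to the strong Markov property, I would check that the resulting (time-inhomogeneous) transition semigroup is Feller, which is immediate from the fact that the densities $p_{r,s}(a,b)$ are Gaussian with mean $\tfrac{t-s}{t-r}a$ and variance depending continuously on $(r,s)$ for $r<s<t$; this gives the strong continuity property needed. With the Feller property and the continuity of the bridge paths in hand, the strong Markov property at any stopping time $\tau<t$ follows by the textbook approximation of $\tau$ from above by dyadic stopping times and passing to the limit, exactly as for ordinary Brownian motion. The only mild subtlety is the degeneracy of the kernel as $s\uparrow t$, but this causes no trouble because the process is defined on $[0,t]$ with a deterministic endpoint; the strong Markov property is stated and used only for stopping times strictly less than $t$, which is all that is needed in the sequel.
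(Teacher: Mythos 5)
The paper does not actually prove this lemma; it is stated as a collection of standard facts about Brownian bridge, with a pointer to Bramson's monograph (Section 2) for justification, and no argument is given in the text. So there is no "paper proof" to compare against, and your proposal should be judged on its own.

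Your argument is correct. For part 2 the covariance computation $\mathrm{Cov}(\mathfrak{z}_t(s),x(t)) = s - \tfrac{s}{t}\cdot t = 0$ is right, and joint Gaussianity (all quantities are linear functionals of the underlying Brownian motion) upgrades uncorrelatedness to independence of the full process from $x(t)$; this is the standard and essentially unique proof. For part 1 your decomposition $\mathfrak{z}_t(s) = \tfrac{t-s}{t-r}\mathfrak{z}_t(r) + \eta(s)$ is the natural one, and your claims check out: for $u\le r\le s$ one has $\mathrm{Cov}(\mathfrak{z}_t(s),\mathfrak{z}_t(u)) = u\tfrac{t-s}{t}$ and $\mathrm{Cov}(\mathfrak{z}_t(r),\mathfrak{z}_t(u)) = u\tfrac{t-r}{t}$, so $\mathrm{Cov}(\eta(s),\mathfrak{z}_t(u)) = 0$, and a similar computation shows $\mathrm{Cov}(\eta(s),\eta(s')) = \tfrac{(s-r)(t-s')}{t-r}$ for $r\le s\le s'\le t$, which is indeed the covariance of a bridge on $[r,t]$ pinned at $0$. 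This gives the Markov property with an explicit Gaussian transition kernel, and your route to the strong Markov property via the Feller property of the (time-inhomogeneous) kernels plus dyadic approximation of stopping times is the textbook upgrade; the remark that the degeneracy as $s\uparrow t$ is harmless because the endpoint is deterministic and the property is only invoked for $\tau < t$ is exactly the right caveat. One could alternatively argue the strong Markov property by viewing $\mathfrak{z}_t$ as a time-change/Doob $h$-transform of Brownian motion, or by passing to the space-time process to make it homogeneous, but your version is clean and self-contained.
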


If $l$ is a curve $l:[s_1, s_2]\rightarrow \R$, then $B_{l}$ (or $B_l[s_1, s_2]$ in case of ambiguity) will denote the set of paths lying strictly above $l$ on $[s_1, s_2]$. 
Similarly, $B^l$ will denote the set of paths lying strictly below $l$.
The following monotonicity property plays a crucial r\^ole in Section \ref{sec_localization_paths}. 

\begin{lem} \cite[Lemma 2.6]{bramson_monograph} \label{monotonicity} Assume that the curves $l_1, l_2$ and $\Lambda$ satisfy
$l_1(s) \leq l_2(s) \leq \Lambda(s)$ for all, $s\in [0,t]$, and that $P^0\left[B_{l_2}[0,t] \right]> 0$. 
then 
\beq \label{below_lambda}
P^0\left[B^\Lambda \big| B_{l_1} \right] \geq P^0\left[B^\Lambda \big| B_{l_2}\right]
\eeq
and
\beq \label{above_lambda}
P^0\left[B_\Lambda \big| B_{l_1} \right] \leq P^0\left[B_\Lambda \big| B_{l_2}\right].
\eeq

\end{lem}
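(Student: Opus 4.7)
The lemma is a monotonicity statement: raising the lower barrier from $l_1$ to $l_2$ pushes the bridge conditioned on staying above it stochastically upward, so it becomes less likely to stay below any upper curve $\Lambda$ and more likely to cross above it. My plan is to reduce both assertions to a single FKG-type inequality under the conditional law $P^0[\,\cdot\,|\,B_{l_1}]$: namely, that the increasing event $B_{l_2}$ is negatively correlated with the decreasing event $B^\Lambda$ and positively correlated with the increasing event $B_\Lambda$. Using $B_{l_2}\subset B_{l_1}$, both \eqref{below_lambda} and \eqref{above_lambda} follow from these two correlation inequalities by elementary rearrangement of conditional probabilities.

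To prove the FKG statement I would start from Pitt's inequality for $P^0$: the covariance $\mathrm{Cov}(\mathfrak{z}_t(s),\mathfrak{z}_t(u))=\min(s,u)(1-\max(s,u)/t)$ is nonnegative, so $P^0$ itself satisfies FKG for monotone events in the pointwise order on $C([0,t])$. To transfer the inequality to $P^0[\,\cdot\,|\,B_{l_1}]$ I would discretize: along a partition $0=s_0<\cdots<s_N=t$, replace $B_l$ by the cylinder event $\{\omega(s_k)\geq l(s_k):\,1\leq k\leq N-1\}$. The finite-dimensional marginal $(\mathfrak{z}_t(s_k))_k$ has a log-supermodular density (a consequence of the nonnegative correlations), a property that is preserved by restriction to the convex positive cone; hence the Ahlswede--Daykin/Holley inequality yields FKG for the discretized conditional measure, and this passes to the limit by continuity of paths.

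A more direct route, which avoids the FKG machinery altogether, uses the strong Markov property of Lemma \ref{prop_bbridge}. Because $B_{l_2}\subset B_{l_1}$, inequality \eqref{below_lambda} is equivalent to
$$P^0\bigl[B^\Lambda\,\big|\,B_{l_1}\setminus B_{l_2}\bigr]\;\geq\;P^0\bigl[B^\Lambda\,\big|\,B_{l_2}\bigr].$$
For paths in $B_{l_1}\setminus B_{l_2}$, let $\tau$ be the first time the bridge hits $l_2$; by the strong Markov property the post-$\tau$ trajectory is a bridge from $(\tau,l_2(\tau))$ to $(t,0)$ constrained to remain above $l_1$. Comparing with the corresponding decomposition for paths in $B_{l_2}$, regularised by first conditioning them to stay above $l_2+\eta$ and then letting $\eta\downarrow 0$, one is left comparing two bridges with the same endpoints and end-time but started at strictly different heights $l_2(\tau)$ versus $l_2(\tau)+\eta$. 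Lowering the initial height can only lower the whole path, so the probability of $B^\Lambda$ on $[\tau,t]$ is larger in the first case, and one integrates against the common law of $\tau$ to conclude; inequality \eqref{above_lambda} is handled symmetrically.

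The main obstacle in either route is the treatment of irregular behaviour of the bridge at the conditioning barrier. In the FKG approach this arises in the passage from discretised to continuous paths when $l_1$ is non-smooth, requiring a careful monotone-class argument to identify the limiting conditional measure. In the strong-Markov approach it appears in the $\eta\downarrow 0$ limit and in ensuring that $\tau$ is an admissible stopping time for bridges that might graze $l_2$ tangentially. I expect the latter route to be the one actually taken, as it matches the style of Bramson's monograph where this result originates and prepares the ground for the conditional-bridge manipulations used in the subsequent sections.
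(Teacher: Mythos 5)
The paper does not prove this lemma; it is quoted verbatim from Bramson's monograph (his Lemma 2.6), so your argument can only be compared against Bramson's original. His proof discretizes the bridge to a finite Gaussian Markov chain and builds a monotone, coordinate-by-coordinate coupling of the two conditioned laws; that construction is the probabilistic counterpart of the Holley/FKG machinery you invoke, so your first route is in the right spirit and your second route is roughly the coupling Bramson actually carries out (though, as you note, the stopping-time version needs extra care).

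Two corrections are needed. First, you should notice that \eqref{above_lambda} requires no correlation inequality at all: since $\Lambda\geq l_2\geq l_1$ one has the inclusions $B_\Lambda\subset B_{l_2}\subset B_{l_1}$, hence $P^0\left[B_\Lambda \mid B_{l_i}\right]=P^0\left[B_\Lambda\right]/P^0\left[B_{l_i}\right]$ for $i=1,2$, and the trivial bound $P^0\left[B_{l_1}\right]\geq P^0\left[B_{l_2}\right]$ already gives the inequality. Only \eqref{below_lambda} is substantive. Second, your parenthetical that log-supermodularity of the finite-dimensional density is ``a consequence of the nonnegative correlations'' is false for general Gaussian vectors: Pitt's condition (nonnegative covariances) is strictly weaker than log-supermodularity of the density, which for a centred Gaussian is equivalent to the precision matrix $\Sigma^{-1}$ having nonpositive off-diagonal entries. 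What saves your argument is that the discretized bridge is a one-dimensional Markov chain with positively correlated one-step transitions, so $\Sigma^{-1}$ is tridiagonal with nonpositive off-diagonal entries; that is the property you must invoke, not Pitt's condition. With that fixed, the first route is sound: log-supermodularity is preserved under restriction to the sublattice $\{x_k\geq l_1(s_k)\ \forall k\}$, Holley's inequality yields stochastic domination of $P^0\left[\,\cdot\mid B_{l_2}\right]$ over $P^0\left[\,\cdot\mid B_{l_1}\right]$ in the pointwise order, and since $B^\Lambda$ is decreasing \eqref{below_lambda} follows, with the discrete-to-continuous passage handled by continuity of paths and the hypothesis $P^0\left[B_{l_2}\right]>0$. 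Your second route, as written, merely displaces the difficulty: the step ``lowering the initial height can only lower the whole path'' for a bridge conditioned to stay above a curve is itself the stochastic-domination claim one is trying to prove and would require its own coupling argument, so I would not rely on it in that form.
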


The following lemma yields uniform bounds on 
conditional probabilities of a Brownian bridge to stay below certain concave curves.  
\begin{lem} \cite[Lemma 2.7]{bramson_monograph} \label{more_than_fluctuations} Set
 \beq\label{anton.9}
                 \Lambda_t(s) = \begin{cases}
                                 C s^\vare & \text{for} \quad 0\leq s \leq t/2, \\
				C(t-s)^\vare & \text{for} \quad t/2 \leq s \leq t, 
                                \end{cases}
                \eeq
where $\vare > 1/2$ and $C>0$. Then, 
\beq \label{more_uniform}
P^0\left[ B^{\Lambda_t}[r, t-r] \Big| B_0[r, t-r] \right] \to 1 \; \text{uniformly in}\; t>3r \; \text{as}\; r\to \infty.
\eeq
More precisely, for a fixed constant, $a>0$, and $\de \defi 2\vare-1>0$, 
\beq \label{more_bound}
P^0\left[ B^{\Lambda_t}[r, t-r] \Big| B_0[r, t-r] \right] \geq 1- 2 a C \sum_{k= r}^\infty k \ee^{-C k^\de}.
\eeq
\end{lem}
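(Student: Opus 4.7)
My plan is to prove the lemma by discretizing $[r,t-r]$ and union-bounding the exceedance events. For each integer $k$ with $r \leq k \leq t-r-1$, set
\begin{equation*}
A_k \defi \{\mathfrak{z}_t(s) \geq \Lambda_t(s) \text{ for some } s \in [k,k+1]\},
\end{equation*}
so $(B^{\Lambda_t}[r,t-r])^c \subset \bigcup_k A_k$. The time-reversal symmetry $\mathfrak{z}_t(s) \stackrel{d}{=} \mathfrak{z}_t(t-s)$ allows me to restrict to $r \leq k \leq \lfloor t/2 \rfloor$ with a factor $2$; on this range $\Lambda_t$ is non-decreasing, so $\Lambda_t(s) \geq Ck^\vare$ on $[k,k+1]$. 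The task then reduces to showing $\sum_{k\geq r} P^0[A_k \mid B_0[r,t-r]] \lesssim aC \sum_{k\geq r} k e^{-Ck^\delta}$.

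For each $k$, I estimate the numerator and denominator of $P^0[A_k \mid B_0[r,t-r]] = P^0[A_k \cap B_0[r,t-r]] / P^0[B_0[r,t-r]]$ separately. The denominator is handled by conditioning on $(\mathfrak{z}_t(r),\mathfrak{z}_t(t-r))=(a,b)$: given these values the bridge on $[r,t-r]$ is an ordinary Brownian bridge from $a$ to $b$, and the classical ballot identity $P[\text{positive} \mid a,b]=1-\exp(-2ab/(t-2r))$ combined with the jointly Gaussian law of $(\mathfrak{z}_t(r),\mathfrak{z}_t(t-r))$ (marginal variances $r(t-r)/t \asymp r$, covariance $r^2/t$ negligible for $t>3r$) gives $P^0[B_0[r,t-r]] \asymp r/t$ uniformly in $t>3r$.

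For the numerator I use the Markov property of $\mathfrak{z}_t$ at times $k$ and $k+1$, which decouples the bridge into three independent pieces given $(\mathfrak{z}_t(k),\mathfrak{z}_t(k+1))=(u,v)$. Dropping positivity on the middle interval $[k,k+1]$ (legitimate via Lemma \ref{monotonicity}), the reflection principle for a length-one bridge between $u$ and $v$ yields
\begin{equation*}
P\bigl[\sup\nolimits_{s\in[k,k+1]}\mathfrak{z}_t(s) \geq Ck^\vare \,\big|\, u,v\bigr] \leq \exp\bigl(-2(Ck^\vare-u)_+(Ck^\vare-v)_+\bigr),
\end{equation*}
while the outer positivity events on $[r,k]$ and $[k+1,t-r]$ contribute, by iterated applications of the same ballot identity to the sub-bridges, a factor that scales like $r/t$ and matches the denominator. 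A saddle-point analysis of the Gaussian integral over $(u,v)$ (with marginal variance $\asymp k$) against the reflection bound places the saddle at $u,v \approx Ck^\vare$, producing an exponential decay $\exp(-Ck^\delta)$ with $\delta = 2\vare-1 > 0$, together with a polynomial prefactor of order $k$ coming from the Gaussian volume at the saddle. The upshot is $P^0[A_k \cap B_0[r,t-r]] \leq \kappa (r/t)\, k\, e^{-Ck^\delta}$, whence $P^0[A_k \mid B_0[r,t-r]] \leq aC\, k\, e^{-Ck^\delta}$ after division; summing over $k\geq r$ and doubling for symmetry yields \eqref{more_bound}, and \eqref{more_uniform} follows since $\delta > 0$.

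The main obstacle is the \emph{quantitative} cancellation of the $r/t$ prefactor: both numerator and denominator vanish like $r/t$ as $t\to\infty$, so the bound on the ratio cannot come from crude asymptotics but requires explicit matching of the sub-bridge positivity probabilities on $[r,k]$ and $[k+1,t-r]$ with the Gaussian integration over $(u,v)$. One must also carefully treat the regime where $u$ or $v$ exceeds $Ck^\vare$ (where the reflection bound trivializes but the Gaussian tail $\exp(-u^2/(2k))$ alone supplies the required decay) and the near-perfect correlation between $\mathfrak{z}_t(k)$ and $\mathfrak{z}_t(k+1)$, which affects the saddle-point geometry and hence the polynomial prefactor $k$ in the final bound.
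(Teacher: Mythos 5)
Note first that the paper does not prove this lemma at all: it is cited from Bramson's monograph [Lemma 2.7], so there is no internal proof to compare your attempt against. Judged on its own, your outline (discretize into unit intervals, union bound, handle the conditional probability by separating numerator and denominator, Markov decomposition at $k$ and $k+1$, reflection bound on the middle piece, Gaussian integration over $(u,v)$) is a sensible and natural strategy, and your estimate $P^0[B_0[r,t-r]]\asymp r/t$ uniformly in $t>3r$ is correct. The genuine gap is in the numerator, at precisely the step you flag as ``the main obstacle'': the assertion that ``the outer positivity events on $[r,k]$ and $[k+1,t-r]$ contribute \dots\ a factor that scales like $r/t$ and matches the denominator'' is false as a pointwise statement in $(u,v)$, and it is the entire content of the lemma. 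The sub-bridge on $[r,k]$ runs from the free Gaussian value $\mathfrak z_t(r)$ to the pinned value $u$, and that on $[k+1,t-r]$ runs from $v$ to the free value $\mathfrak z_t(t-r)$; their positivity probabilities depend strongly on $u$, $v$ and $k$. Near the saddle $u\approx v\approx Ck^{\varepsilon}$, a direct computation gives a left-positivity factor of order $\sqrt{r}\,k^{\varepsilon-1}$ (not $\asymp r/t$, and for $k\gg r$ not close to $1$ either) and a right-positivity factor of order $\sqrt{r}\,k^{\varepsilon}/(t-k)$; dividing the product by the $r/t$ denominator yields a $k$-dependent polynomial of order $k^{2\varepsilon-1}=k^{\delta}$, not a constant. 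Turning your sketch into a proof requires carrying out the joint integration of these $(u,v)$-dependent positivity factors against the Gaussian density and the reflection bound, uniformly in $k$ and in $t>3r$, and showing that the resulting sum over $k\geq r$ is of the claimed form. That computation is the proof, and it is absent from your write-up.

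Two smaller points worth flagging if you do carry out the plan. First, ``dropping positivity on the middle interval $[k,k+1]$'' is simply removing a constraint, hence an upper bound by monotonicity of measures; it is not an application of Lemma~\ref{monotonicity}, which concerns comparison of \emph{conditioning} events. Second, the Gaussian penalty at the saddle $u\approx Ck^{\varepsilon}$ with variance $\asymp k$ is $\exp(-C^{2}k^{\delta}/2)$, not $\exp(-Ck^{\delta})$. For $C<2$ this goes the wrong way, so the precise form of \eqref{more_bound} with the same constant $C$ in the exponent does not follow directly from your saddle-point heuristic; either a sharper argument or a relabelling of constants (or allowing $a$ to depend on $C$, $\varepsilon$) is needed.
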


Lemma \ref{more_than_fluctuations} allows  to compare probabilities that Brownian bridge  
hits curves which are close to one another, see Section \ref{sec_localization_paths} below. We will also need
to compute probabilities that Brownian bridge stays below linear functions, which is of course straightforward (the only minor issue  
is that such linear functions will possibly interpolate between points which are allowed to grow with time, see Section \ref{sec_extremality_overlaps}).  

\begin{lem}
\label{lem: BB estimate}
Let $Z_1,Z_2\geq 0 $ and $r_1,r_2\geq 0$. 
Then for $t>r_1+r_2$,
\beq\label{anton.10}
\PP\left[\mathfrak z_t(s) \leq (1 - \frac{s}{t} )Z_1 +\frac{s}{t} Z_2, r_1\leq s \leq t-r_2 \right]\leq 
\frac{2}{t-r_1-r_2}\prod_{i=1,2}\left(Z(r_i)+\sqrt{r_i}\right)\ ,
\eeq
where $Z(r_1)\defi (1 - \frac{r_1}{t} )Z_1 +\frac{r_1}{t} Z_2$ and
$Z(r_2)\defi\frac{r_2}{t} Z_1+(1 - \frac{r_2}{t} )Z_2$.
\end{lem}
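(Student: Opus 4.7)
My plan is to condition on the values of the bridge at the two cut-off times $r_1$ and $t-r_2$, and then invoke the classical reflection-principle identity for the probability that a Brownian bridge crosses a linear function.

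\textbf{Step 1: Markov reduction.} Write $X\defi \mathfrak z_t(r_1)$ and $Y\defi \mathfrak z_t(t-r_2)$. By the strong Markov property of the Brownian bridge (Lemma \ref{prop_bbridge}, 1.), conditionally on $X=x_1$ and $Y=x_2$, the restriction of $\mathfrak z_t$ to $[r_1,t-r_2]$ is itself a Brownian bridge of length $T\defi t-r_1-r_2$, from $x_1$ to $x_2$. The linear function $s\mapsto (1-s/t)Z_1+(s/t)Z_2$ restricted to $[r_1,t-r_2]$ is still linear, taking the values $Z(r_1)$ at $s=r_1$ and $Z(r_2)$ at $s=t-r_2$; so the event in question is that this shorter bridge lies below the linear interpolation of $Z(r_1)$ and $Z(r_2)$.

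\textbf{Step 2: The one-bridge bound.} Subtracting the linear interpolation from a Brownian bridge of length $T$ with endpoints $x_1,x_2$ yields a Brownian bridge of the same length with endpoints $x_1-Z(r_1)$ and $x_2-Z(r_2)$, and the question becomes whether this new bridge stays negative on $[0,T]$. By the standard reflection-principle formula, if both $x_i<Z(r_i)$ the probability of this event equals $1-\exp\bigl(-2(Z(r_1)-x_1)(Z(r_2)-x_2)/T\bigr)$, otherwise it is $0$. Using $1-e^{-u}\le u$, we obtain
\beq\label{eq:cond-bd}
\PP\bigl[\mathfrak z_t\le L\text{ on }[r_1,t-r_2]\bigm|X=x_1,Y=x_2\bigr]\le \frac{2\,(Z(r_1)-x_1)_+\,(Z(r_2)-x_2)_+}{t-r_1-r_2}.
\eeq

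\textbf{Step 3: Integrating out the endpoints.} Taking expectation in \eqref{eq:cond-bd} and using $(Z(r_i)-\cdot)_+\le Z(r_i)+|\cdot|$ (since $Z(r_i)\ge 0$), the numerator is bounded by
\beq
\E\bigl[(Z(r_1)+|X|)(Z(r_2)+|Y|)\bigr]=Z(r_1)Z(r_2)+Z(r_1)\E|Y|+Z(r_2)\E|X|+\E|XY|.
\eeq
The variables $X,Y$ are centred Gaussians with $\mathrm{Var}(X)=r_1(t-r_1)/t\le r_1$ and $\mathrm{Var}(Y)=r_2(t-r_2)/t\le r_2$; hence $\E|X|\le\sqrt{r_1}$, $\E|Y|\le\sqrt{r_2}$, and by Cauchy--Schwarz $\E|XY|\le\sqrt{r_1 r_2}$. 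Collecting terms factors as $(Z(r_1)+\sqrt{r_1})(Z(r_2)+\sqrt{r_2})$, which divided by $t-r_1-r_2$ and multiplied by $2$ gives exactly the claimed bound.

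The only step that requires mild care is Step 2: one must verify that the affine-shifted process really is a Brownian bridge (so that the reflection formula applies) and track the correct endpoint values; the rest is routine. Steps 1 and 3 are straightforward applications of the Markov property and standard Gaussian moment estimates.
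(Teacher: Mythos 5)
Your proof is correct and follows essentially the same route as the paper's: condition on $\mathfrak z_t(r_1)$ and $\mathfrak z_t(t-r_2)$, apply the exact boundary-crossing formula for a Brownian bridge below a line, linearise $1-e^{-u}\le u$, and integrate out the Gaussian endpoints using second-moment/Cauchy--Schwarz bounds. The only cosmetic difference is that you bound $(Z(r_i)-x_i)_+\le Z(r_i)+|x_i|$ before taking expectations, whereas the paper expands $(Z(r_1)-x_1)(Z(r_2)-x_2)$ and estimates the four resulting integrals term by term; both yield the same product $(Z(r_1)+\sqrt{r_1})(Z(r_2)+\sqrt{r_2})$.
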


\begin{proof}
We suppose that $r_1,r_2>0$. The result for $r_1=0$ or $r_2=0$ follows by continuity.
We first condition on the value of $\mathfrak z_t$ at $s=r_1$ and $s=t-r_2$.
The probability
\begin{equation}
\label{eqn: prob estimate 1}
\PP\left[\mathfrak z_t(s) \leq (1 - \frac{s}{t} )Z_1 +\frac{s}{t} Z_2, r_1 \leq s \leq t-r_2 \Big| \mathfrak z_t(r_1)=x_1, \mathfrak z_t(t-r_2)=x_2\right]
\end{equation}
is the same as the probability of the event that a Brownian bridge of length $t-r_1-r_2$ starting at $x_1$ and ending at $x_2$
lies below the straight line joining $Z(r_1)\defi(1 - \frac{r_1}{t} )Z_1 +\frac{r_1}{t} Z_2$ and $Z(r_2)\defi\frac{r_2}{t} Z_1 +\left(1-\frac{r_2}{t}\right) Z_2$, in the notation of the statement.
It is not hard to show (see, e.g., \cite{scheike}) that this probability is exactly
\beq\label{anton.101}
1-\exp\left\{\frac{-2}{t-r_1-r_2}\left(Z(r_1)-x_1\right)\left(Z(r_2)-x_2\right)\right\} \ .
\eeq
In particular, on the event $\left\{\mathfrak z_t(s) \leq (1 - \frac{s}{t} )Z_1 +\frac{s}{t} Z_2\right\}$, 
both $Z(r_1)-x_1$ and $Z(r_2)-x_2$ are non-negative, and (\ref{anton.101}) is smaller than
\begin{equation}
\label{eqn: prob estimate 2}
\frac{2(Z(r_1)-x_1)(Z(r_2)-x_2)}{t-r_1-r_2}=\frac{2}{t-r_1-r_2}\Big(Z(r_1)Z(r_2)-x_1Z(r_2)-x_2Z(r_1) +x_1x_2\Big)\ .
\end{equation}
Therefore the left-hand side of (\ref{anton.10}) is smaller than
\begin{equation}
\label{eqn: prob estimate 3}\bea
\int_{-\infty}^{Z(r_1)}\int_{-\infty}^{Z(r_2)}& \frac{2}{t-r_1-r_2}\Big(Z(r_1)Z(r_2)-x_1Z(r_2)-x_2Z(r_1) +x_1x_2\Big) 
\\&\times \PP\left[\mathfrak z_t(r_1)\in \dd x_1, \mathfrak z_t(t-r_2)\in \dd x_2\right] \ .\eea
\end{equation}
The integral over the first term is smaller than  $\frac{2}{t-r_1-r_2}Z(r_1)Z(r_2)$. 
By the Cauchy-Schwarz inequality, the second term is smaller than 
\beq\label{anton.102}
\frac{2Z(r_2)}{t-r_1-r_2}\left(\int_{-\infty}^{\infty}x_1^2 \ \PP\left[\mathfrak z_t(r_1)\in \dd x_1\right] \right)^{1/2}
=\frac{2Z(r_2)\sqrt{r_1(1-r_1/t)}}{t-r_1-r_2}\leq \frac{2Z(r_2)\sqrt{r_1}}{t-r_1-r_2}\ ,
\eeq
since $\E \mathfrak z_t(r_1)^2= r_1(1-r_1/t)$.
Similarly, the third term of \eqref{eqn: prob estimate 3} is  bounded by
\beq\label{anton.1002}
\frac{2Z(r_1)\sqrt{r_2}}{t-r_1-r_2} \ .
\eeq
Finally, the fourth term is bounded, by the Cauchy-Schwarz inequality, by
\beq\label{anton.103}
\frac{2}{t-r_1-r_2} \left[\int_{-\infty}^\infty x_1^2  \PP\left[\mathfrak z_t(r_1)\in \dd x_1\right]\right]^{1/2} \left[\int_{-\infty}^\infty x_2^2  \PP\left[\mathfrak z_t(t-r_2)\in \dd x_2\right]\right]^{1/2}\leq \frac{2\sqrt{r_1r_2}}{t-r_1-r_2}\ .
\eeq
The claimed bound is obtained by regrouping the four terms.
\end{proof}

\section{The genealogy of extremal particles - proofs} \label{sec_extremality_overlaps}
In this section, we  give a proof of Theorem \ref{teor_overlaps_extremality} based on the localization of the paths.

\begin{proof}[Proof of Theorem \ref{teor_overlaps_extremality}]
Let $D\subset \R$ be a compact set. Then  there exist  $\underline{D}\leq \overline{D}\in \R$, such that   $D\subseteq [\underline{D},\overline{D}]$.
To prove the theorem, we need to find $r_o=r_o(D,\vare)$ and $t_o=t_o(D,\vare)$ such that for $r\geq r_o$ and $t>\max\{t_o,3r\}$,
\beq
\label{eqn: to prove}
\PP\left[ \exists i,j \leq n(t): x_i(t), x_j(t) \in m(t)+D \; \text{and}\; Q_t(i, j) \in [r, t-r] \right] < \vare \ .
\eeq
Corollary \ref{cor: tube} shows the existence of $r_1=r_1(D,\vare)$ such that, with probability at least $1-\vare$,
the extremal particles reaching  $D$ at time $t$ satisfy for $t>3r_1$,
\beq \label{tube}
\overline{D}+\frac{s'}{t} m(t) - f_{t, \beta}(s)   \leq x_i(s') \leq \overline{D}+\frac{s'}{t}m(t)- f_{t, \alpha}(s') 
\quad \forall { r_1 \leq s' \leq t-r_1} \ .
\eeq
Here and henceforth,  $ \Xi_{D, t}$ will denote the set of paths, $x(s')$, which satisfy \eqref{tube}, and for which
 $x(t) \in m(t)+D$. We will denote by $ \Xi_{D, t}^{[s,t-r_1]}$ the greater set of paths where the inequalities
 \eqref{tube} are satisfied for all  $s'\in [s,t-r_1]$ for some $s\geq r_1$, and again $x(t) \in m(t)+D$.

Let $K \defi \sum_{k} p_k k(k-1)$ where $\{p_k\}$ is the offspring distribution. 
We  need a straightforward generalization of Lemma 10 of  \cite{bramson}.
It expresses the expected number of pairs of particles of BBM whose path satisfies some conditions, say $\Xi_{D, t}$,
\begin{equation}
\label{eqn: formula moment Bramson}
 \bea
&\E\left[\#\left\{(i,j): x_i, x_j \in \Xi_{D, t} \  i\neq j \right\}\right]\\
 &\qquad = K \ee^t \int_0^{t} \ee^{t-s} \dd s \int_{-\infty}^\infty \dd \mu_s(y) \PP\left[x\in \Xi_{D, t} \mid x(s) = y \right] 
\PP\left[x\in \Xi_{D, t}^{[s, t-r_1]} \mid x(s) = y \right] ,
\eea
\end{equation}
where $\mu_s$ is the Gaussian measure of variance $s$.
In the case where the event considered includes a condition on $Q_t(i,j)$, 
the integral on the branching time $\dd s$ changes and we have
\begin{equation}
\label{eqn: formula moment}
 \bea
&\E\left[\#\left\{(i,j)\ i\neq j: x_i, x_j \in \Xi_{D,t}, \; Q_t(i,j) \in [r, t-r]\right\}\right]\\
 &\qquad = K \ee^t \int_r^{t-r} \ee^{t-s} \dd s \int_{-\infty}^\infty \dd \mu_s(y) \PP\left[x\in \Xi_{D, t} \mid x(s) = y \right] 
\PP\left[x\in \Xi_{D, t}^{[s, t-r_1]} \mid x(s) = y \right] .
\eea
\end{equation}
We will show the existence of a $r_o=r_o(D,\epsilon)$ and $t_o(D,\epsilon)$ such that, for $r>r_o$ and $t>\max\{t_o,3r\}$, 
the right-hand-side is smaller than $\epsilon$.
This will imply \eqref{eqn: to prove} by Markov's inequality and Corollary \ref{cor: tube} (provided we take $r_o>r_1$).

The idea is to bound the term $\PP\left[x\in \Xi_{D, t}^{[s, t-r_1]} \mid x(s) = y \right]$ uniformly in $y$. 
Since $s'\mapsto \overline{D}+\frac{s'}{t}m(t) -f_{t, \alpha}(s')$ is a convex function that equals $m(t)+\overline{D}$ at time $t$, the event
\beq\label{anton.105}
\left\{x(s')\leq \overline{D}+ \frac{s'}{t}m(t)- f_{t, \alpha}(s'),\,\forall_{ s\leq s' \leq t-r_1}\right\}
\eeq
is contained in  the event where $x(s')$ lies below the straight line joining $\overline{D}+\frac{s}{t}m(t) -f_{t, \alpha}(s)$
at time $s$ to $m(t)+\overline{D}$ at time $t$, i.e.
\beq\label{anton.11}
 \left\{x(s')\leq \frac{(1-\frac{s}{t})m(t)+f_{t, \alpha}(s')}{t-s}\{s'-s\} + \overline{D}+\frac{s}{t}m(t) -f_{t, \alpha}(s), \,\forall_{s\leq s' \leq t-r_1}\right\}. 
\eeq
\begin{figure}[here] \label{fig:figure.5}
\input{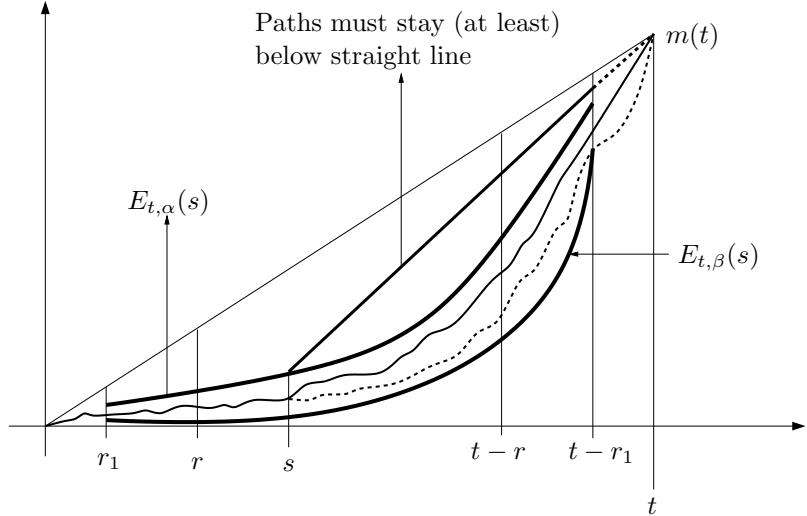}
\caption{Reference for Theorem \ref{teor_overlaps_extremality}}
\end{figure}

\begin{center}

\end{center}

We write $a= \overline{D}+\frac{s}{t}m(t) -f_{t, \alpha}(s) -y$ and  $b=\frac{(1-\frac{s}{t})m(t)+f_{t, \alpha}(s)}{t-s}$ .
Subtracting $x(s)$ and  $\frac{s'-s}{t-s} x(t)$ from $x(s')$, and shifting the time $s'$ by $s$, we get that 
\beq\label{anton.12} \bea
& \PP\left[x\in \Xi_{D, t}^{[s, t-r_1]} \mid x(s) = y \right]\\&\leq
\PP\Big[x(s')-\frac{s'}{t-s} x(t-s) \leq a+bs' -\frac{s'}{t-s} x(t-s) ,  \forall_{ 0\leq s'\leq t-s-r_1}, x(t-s)\in m(t)-y+D  \Big]\ .
\eea \eeq
Since $D\subseteq [\underline{D},\overline{D}]$, this is bounded above by
\beq\label{anton.13} \bea
& \PP\Big[  x(s') -\frac{s'}{t-s} x(t-s)  \leq (1-\frac{s'}{t-s}) Z_1 +\frac{s'}{t-s} Z_2 , \forall_{0\leq s'\leq t-s-r_1} , \ x(t-s)\geq m(t)-y+\underline{D}  \Big],
\eea \eeq
for $Z_1=\overline{D}+\frac{s}{t}m(t) -f_{t, \alpha}(s)-y$ and $Z_2=\overline{D}-\underline{D}$. By the independence property of Brownian bridge, this is simply
\begin{equation}
\label{eqn: estimate prob 1}
\PP\left[\mathfrak z_{t-s}(s') \leq  (1-\frac{s'}{t-s}) Z_1 +\frac{s'}{t-s} Z_2, \, \forall_{0\leq s'\leq t-s-r_1}\right] \PP\left[ x(t-s)\geq m(t)-y+\underline{D}  \right]\ .
\end{equation}
By Lemma \ref{lem: BB estimate}, the Brownian bridge probability is smaller than
\begin{equation}
\label{eqn: estimate prob}
\frac{ 2 \ Z_1}{t-s-r_1}\left(\frac{r_1}{t-s} Z_1 +\left(1-\frac{r_1}{t-s}\right)Z_2 + \sqrt{{r_1}}\right)\ .
\end{equation}
Since $x(s)=y$ lies between the entropic and lower envelope by equation \eqref{tube}, we must have 
$\overline{D}+\frac{s}{t}m(t)-  f_{t, \beta}(s) \leq x(s) \leq \overline{D}+\frac{s}{t}m(t)  -f_{t, \alpha}(s)$, which implies
\beq\label{anton.106}
0\leq Z_1 \leq f_{t, \beta}(s) -f_{t, \alpha}(s) \leq \kappa f_{t, \beta}(s) 
\eeq
for some constant $\kappa>0$ independent of $t$ and $r$. 
(For the rest of the proof, we use $\kappa$ for a generic term whose value might change but that does not depend on $r$ and $t$, and might depend on $D$).

We now precise the choice of $r_o(D,\epsilon)$ to make \eqref{eqn: estimate prob} small. 
Since $r\leq t-s$ in the integral \eqref{eqn: formula moment}, it holds that $\frac{1}{t-s-r_1}\leq \frac{2}{t-s}$ for the choice $r>2r_1$. Hence we require $r_o>2r_1$.
Moreover $\frac{r_1Z_1}{t-s}\leq \frac{r_1\kappa f_{t, \beta}(s)}{t-s}\leq \kappa r_1 (t-s)^{\beta-1}$ by the estimate on $Z_1$. Since $r\leq t-s$, we can pick 
$r_o^{1-\beta}> r_1$ so that $\frac{r_1Z_1}{t-s}<1$. 
We also require that $\sqrt{r_o} \geq \max\{Z_2,1\} $.
With these choices as well as \eqref{anton.106}, \eqref{eqn: estimate prob} can be made smaller than
\beq\label{anton.200}
\frac{\kappa \sqrt{r} Z_1}{t-s}\leq \frac{\kappa \sqrt{r} \ f_{t, \beta}(s) }{t-s}\ .
\eeq

Because $y\leq \frac{s}{t}m(t)-f_{t, \alpha}(s)$, the second term of \eqref{eqn: estimate prob 1} is, by an estimate of the Gaussian density, smaller than
\beq\label{anton.108}
\kappa \frac{e^{-(t-s)}e^{\frac{3}{2}\frac{t-s}{t}\log t} e^{-\sqrt{2} f_{t, \alpha}(s)}}{(t-s)^{1/2}}\ .
\eeq
Putting this together, we obtain the bound uniform in $y$ we were looking for
\begin{equation}
\label{eqn: unif x}
\PP\left[x\in \Xi_{D, t}^{[s, t-r_1]} \mid x(s) = y \right]\leq \kappa \sqrt{r}\ \frac{e^{-(t-s)}e^{\frac{3}{2}\frac{t-s}{t}\log t} f_{t, \beta}(s)   e^{-\sqrt{2} f_{t, \alpha}(s)}}{(t-s)^{3/2}}\ .
\end{equation}
And the right-hand side of \eqref{eqn: formula moment} is bounded above by
\beq
\label{eqn: overlap integral to bound}
 \kappa \ \ee^t  \PP\left[x\in \Xi_{D, t}\right] \ \sqrt{r} \int_r^{t-r} \frac{e^{\frac{3}{2}\frac{t-s}{t}\log t}  f_{t, \beta}(s)  e^{-\sqrt{2} f_{t, \alpha}(s)}}{(t-s)^{3/2}} \dd s\ .
\eeq

The term $\ee^t  \PP\left[x\in \Xi_{D, t}\right]$  is of order $r_1$, uniformly for $t\geq 3r_1$,
which is ensured if $t>3r$.
Indeed, we  have
\beq\label{anton.109}
 \PP\left[x\in \Xi_{D, t}\right]\leq \PP\left[x(s)\leq \frac{s}{t} m(t) + \overline{D}\, \forall_{ r_1 \leq s \leq t-r_1}, x(t)\in m(t) +D \right]\ ,
\eeq
which, using the bounds on $D$, is smaller than
\beq\label{anton.110}
\PP\left[\mathfrak z_t(s) \leq \overline{D},\, \forall_{ r_1 \leq s \leq t-r_1}\right] \PP\left[ x(t)\geq m(t) +\underline{D} \right]\ .
\eeq
By Lemma \ref{lem: BB estimate}, the first term is smaller than $\kappa\frac{r_1}{t-2r_1}$,
and, by the choice of $t$, smaller than $\kappa \frac{r_1}{t}$.
Since $\PP\left[ x(t)\geq m(t) +\underline{D} \right]\leq \kappa  t e^{-t} $,
the claim is proven.

It remains to show that \eqref{eqn: overlap integral to bound} can be made arbitrarily small by taking $r_o$ large.
To do so, we split the domain of integration into the intervals $[r,t/2]$ and $[t/2, t-r]$.
The integral over the first interval is smaller than
\beq\label{anton.111}
\kappa \ r^{3/2}  \int_r^{\infty} s^\beta e^{- (\sqrt{2}) s^\alpha}\dd s\ .
\leq \kappa \kappa' r^{3/2} e^{- r^\alpha},
\eeq
for some $\kappa'$ depending on $\alpha$; this tends to  zero as $r\to\infty$.
On the second interval, we can perform the change of variable $s \to t-s$ to get
\beq\label{anton.14}
\bea
\kappa \  r^{3/2} \int_r^{t/2}  \frac{e^{\frac{3}{2}\frac{s}{t}\log t}s^{\beta}e^{-\sqrt{2} s^\alpha}}{s^{3/2}} \dd s\ .
\eea
\eeq
The integration domain  can again be split into  $[r,t^\delta]$ and $[t^\delta,t/2]$ ($0< \de < 1$). 
It is easy to see that for $t$ large enough the first part is bounded
by 
\beq\label{anton.15}
\kappa \ r^{3/2} \int_r^\infty \frac{s^\beta e^{- \sqrt{2} s^\alpha}}{s^{3/2}}  \dd s \leq \kappa r^{3/2} e^{- r^\alpha}
\eeq
which goes to zero as $r\to\infty$. The second part is smaller than
\beq\label{anton.16}
\kappa \  r^{3/2} t^{3/4}\int_{t^\delta}^{t/2}\frac{s^\beta e^{-   \sqrt{2} s^\alpha}}{s^{3/2}} \dd s \leq \kappa \ r^{3/2} \ t^{7/4} e^{-  t^{\alpha\delta}}\leq
 \kappa \ t^{13/4}e^{- t^{\alpha\delta}},
\eeq
which can be made arbitrarily small by defining $t_o$ large enough.
This concludes the proof of the theorem.
\end{proof}

\section{Localization of paths} \label{sec_localization_paths}
The localization of the paths is based on the determination of an upper envelope, Theorem \ref{teor_upper_envelope},
that the paths of the extremal particles do not reach with overwhelming probability. 
Such an upper envelope allows to 
characterize a tube in which paths of extremal particles spend most of their time. This will be done 
by first identifying in Section \ref{sec_entropic_repulsion} the entropic envelope, hence providing a proof of Theorem \ref{teor_entropic_envelope}. 
We will then identify in Section \ref{sec_lower_envelope} the lower envelope, providing a proof of Theorem \ref{teor_lower_envelope}. 

\subsection{The upper envelope} 
\label{proof_locfin} 
The proof of Theorem \ref{teor_upper_envelope} is an elementary application of the estimate \eqref{bramson_upper_bound_one}.
Recall the definition of $U_{t,\gamma}$ and $f_{t,\gamma}$ given
in Section \ref{loc_paths_sec}. We first prove that the maximum of the process at integer times cannot cross the upper envelope $U_{t,\gamma}$. 
Gaussian estimates extend it to all times.  For convenience, we shall find an upper bound to the probability
\beq\label{anton.120}
\PP\Big[\exists k\leq n(t): x_k(s) > y +m(s)+f_{t, \gamma}(s)\;\text{for some}\; s \in [r, t-r] \Big]\ .
\eeq
The above probability is the probability of Theorem \ref{teor_upper_envelope}
where $U_{t,\gamma}(s)=\frac{s}{t}m(t)+f_{t,\gamma}(s)$ is replaced by $m(s)+f_{t, \gamma}(s)$.
An upper bound for the above probability readily yields an upper bound 
for the probability of the theorem, since
\beq\label{anton.121}
\frac{s}{t}m(t)> m(s)  \ ,
\eeq
for $s>e=2.718...$.
(The above inequality reduces to $
\frac{\log s}{s}> \frac{\log t}{t}$, 
and $\frac{\log x}{x}$ is decreasing for $x>e$).
Throughout this section, we write $\lceil s \rceil$ for the smallest integer greater or equal to $s$. 
\begin{lem}
\label{lem: integer time}
Let $0<\gamma<1/2$.
For any $y\in\R$ and $\epsilon>0$, there exists $r'(\gamma,y,\epsilon)$ such that for $r>r'(\gamma,y,\epsilon)$ and $t>3r$, 
\beq\label{anton.122}
\begin{aligned}
\PP\Big[\exists_{s\in [r,t-r]} : \max_{k\leq n(\lceil s \rceil)}x_k(\lceil s \rceil)\geq y+m(\lceil s \rceil)+f_{t,\gamma}(\lceil s \rceil)\Big]<\epsilon
\end{aligned}
\eeq
\end{lem}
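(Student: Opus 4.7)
Since the threshold on the right-hand side depends only on $\lceil s \rceil$, the event in the lemma coincides with
\[
\bigcup_{j=\lceil r \rceil}^{\lceil t-r \rceil}\Bigl\{\max_{k\leq n(j)} x_k(j)\;\geq\; m(j)+Y_j\Bigr\},\qquad Y_j:=y+f_{t,\gamma}(j).
\]
The plan is to dominate the probability of this union by a straightforward union bound and then apply Bramson's tail estimate \eqref{bramson_upper_bound_one} at each integer time. Provided the hypothesis $0<Y_j<\sqrt{j}$ of \eqref{bramson_upper_bound_one} is met for every $j$ in the range, this yields
\[
\PP[\,\cdot\,]\;\leq\;\kappa\sum_{j=\lceil r\rceil}^{\lceil t-r\rceil}(1+Y_j)^2\, e^{-\sqrt{2}\,Y_j}.
\]

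Because $f_{t,\gamma}$ is symmetric about $t/2$, I would split the sum at $j=\lceil t/2 \rceil$ and, on the upper piece, perform the substitution $j\mapsto t-j$. Both halves are then bounded by the same one-sided tail sum
\[
\kappa\sum_{j=\lceil r\rceil}^\infty (1+y+j^\gamma)^2\, e^{-\sqrt{2}(y+j^\gamma)},
\]
which is the tail of a convergent series, since $e^{-\sqrt{2}\,j^\gamma}$ beats any polynomial in $j$. This tail tends to $0$ as $r\to\infty$, \emph{uniformly} in $t>3r$, so one picks $r'(\gamma,y,\epsilon)$ large enough that twice it is at most $\epsilon$.

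The one point requiring care, and the only place where the assumption $\gamma<1/2$ enters, is the verification that Bramson's tail bound \eqref{bramson_upper_bound_one} is applicable at every integer $j$ in the range, i.e.\ that $Y_j<\sqrt{j}$. For $\lceil r\rceil\leq j\leq t/2$ this reads $y+j^\gamma<\sqrt{j}$, which holds for all $j\geq r$ once $r$ is large, since $\gamma<1/2$. For $t/2\leq j\leq t-r$ it reads $y+(t-j)^\gamma<\sqrt{j}$; using $(t-j)^\gamma\leq (t/2)^\gamma$ together with $\sqrt{j}\geq\sqrt{t/2}$ and $t\geq 3r$, this is again ensured by taking $r$ large. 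I do not foresee any serious obstacle here: the only delicate aspect is that the same hypothesis $\gamma<1/2$ is what makes Bramson's tail estimate applicable throughout the window \emph{and} what makes the resulting series summable, so the two conditions must be matched with a single choice of $r'(\gamma,y,\epsilon)$.
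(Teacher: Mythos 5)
Your proposal is correct and follows essentially the same route as the paper: union bound over integer times, Bramson's tail estimate \eqref{bramson_upper_bound_one} at each $j$, symmetry of $f_{t,\gamma}$ about $t/2$ to reduce to a one-sided convergent tail sum that vanishes as $r\to\infty$ uniformly in $t>3r$. Your explicit check that $0<Y_j<\sqrt{j}$ (so that Bramson's bound is applicable at every $j$ in the window, both on $[r,t/2]$ and, via $(t-j)^\gamma\leq(t/2)^\gamma$ and $\sqrt j\geq\sqrt{t/2}$, on $[t/2,t-r]$) is a worthwhile detail that the paper only addresses in part, and it confirms that $\gamma<1/2$ is doing double duty exactly as you say.
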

\begin{proof}
The event $\Big\{\exists_{s\in [r,t-r]} : \max_{k\leq n(\lceil s \rceil)}x_k(\lceil s \rceil)\geq y +m(\lceil s \rceil)+f_{t,\gamma}(\lceil s \rceil)\Big\}$ is the union 
over $j=\lceil r \rceil, \lceil r \rceil+1, ... ,\lceil t-r\rceil$ of the events
\beq\label{anton.123}
\Big\{ \max_{k\leq n(j)}x_k(j)\geq y +m(j)+f_{t,\gamma}(j)\Big\}\ .
\eeq
We choose $r$ large enough so that $y+f_{t,\gamma}(r)$ is positive.
Applying \eqref{bramson_upper_bound_one} on this event for $t=j$ and $Y=y+f_{t,\gamma}(j)$, we get using the symmetry of the curve $f_{t,\gamma}$,
\beq\label{anton.201}
\begin{aligned}
&\PP\Big[\exists_{ s\in [r,t-r]} : \max_{k\leq n(\lceil s \rceil)}x_k(\lceil s \rceil)\geq y+m(\lceil s \rceil)+f_{t,\gamma}(\lceil s \rceil)\Big]
\\
&\hspace{4cm}\leq 
2\kappa\sum_{j=\lceil r \rceil}^{\lceil t/2 \rceil+1} (1+(j^\gamma+y))^2\exp(-\sqrt{2}(j^\gamma+y))\ .
\end{aligned}
\eeq
The probability of the event can be made arbitrarily small by taking the limits $t\to\infty$ followed by $r\to\infty$, since the right-hand side is summable.
The claim follows.
\end{proof}
We now extend the estimate of the last lemma to include all $s\in [r,t-r]$, not only integer times. 
This is done using the fact that, if the event does happen at times $s$, the maximum at time $\lceil s \rceil$ is very likely to be still high, 
say greater than $m(\lceil s \rceil)+f_{t,\gamma/2}(\lceil s \rceil)$.

\begin{proof}[Proof of Theorem \ref{teor_upper_envelope}]
Plainly, the maximum at time $\lceil s \rceil$ of BBM is either smaller, or greater or equal to  $m(\lceil s \rceil)+f_{t,\gamma/2}(\lceil s \rceil)$.
Using this dichotomy on the event
\beq\label{anton.202}
\{\exists_{ s\in [r,t-r]} : \max_{k\leq n(s)}x_k(s)> y+m(s)+f_{t,\gamma}(s)\Big\}\ .
\eeq
we have that its probability is smaller or equal to 
\beq\label{anton.203}
\begin{aligned}
&\PP\Big[
\exists_{ s\in[r,t-r]}: \max_{k\leq n(s)} x_k(s)> y+m(s)+f_{t,\gamma}(s), \max_{k\leq n(\lceil s \rceil)}x_k(\lceil s \rceil)\geq y+m(\lceil s \rceil)+f_{t,\gamma/2}(\lceil s \rceil)
\Big]+\\
&\PP\Big[
\exists_{ s\in[r,t-r]}: \max_{k\leq n(s)} x_k(s)> y+m(s)+f_{t,\gamma}(s), \max_{k\leq n(\lceil s \rceil)}x_k(\lceil s \rceil)<y+m(\lceil s \rceil)+f_{t,\gamma/2}(\lceil s \rceil)
\Big].
\end{aligned}
\eeq
The first term is bounded above by
\beq\label{anton.204}
\PP
\Big[
\exists_{ s\in[r,t-r]}: \max_{k\leq n(\lceil s \rceil)}x_k(\lceil s \rceil)\geq y+m(\lceil s \rceil)+f_{t,\gamma/2}(\lceil s \rceil)
\Big]\ .
\eeq
By Lemma \ref{lem: integer time}, this is smaller than $\epsilon/2$ by choosing $r>r'(y,\gamma/2,\epsilon/2)$.
It remains to bound the second term.
%\beq\label{anton.205}
%\begin{aligned}
%\PP\Big[
%\exists_{ s\in[r,t-r]}: \max_{k\leq n(s)} x_k(s)\geq y+m(s)+u_{t,\gamma}(s), \max_{k\leq n(\lceil s \rceil)}x_k(\lceil s \rceil)<y+m(\lceil s \rceil)+u_{t,\gamma/2}(\lceil s \rceil)\ .
%\Big]
%\end{aligned}
%\eeq
Let $\mathcal{S}$ be the stopping time
\beq\label{anton.206}
\mathcal{S}\defi\inf\{s\in [1,t-1]: \max_{k\leq n(s)} x_k(s)> y+m(s)+f_{t,\gamma}(s)\}\ .
\eeq
(Here $1$ is just a choice to avoid technicalities at the endpoints).
By conditioning on $\mathcal{S}$, we can rewrite the probability as
\beq\label{anton.207}
\int_r^{t-r}\PP\Big(\max_{k\leq n(\lceil s' \rceil)}x_k(\lceil s' \rceil)<y+m(\lceil s' \rceil)+f_{t,\gamma/2}(\lceil s' \rceil)
\ \Big| \ \mathcal{S}=s'\Big)\ \PP(\mathcal{S}\in ds')
\eeq
We suppose $r>2$. Since $r>\lceil r \rceil-1$ and $t-r<\lceil t \rceil-\lceil r \rceil+1$, the above is smaller than
\beq\label{anton.208}
\sum_{j=\lceil r \rceil-1}^{\lceil t \rceil-\lceil r \rceil+1}\int_j^{j+1}\PP\Big(\max_{k\leq n(\lceil s' \rceil)}x_k(\lceil s' \rceil)<y+m(\lceil s' \rceil)+f_{t,\gamma/2}(\lceil s' \rceil)
\ \Big| \ \mathcal{S}=s'\Big)\ \PP(\mathcal{S}\in ds')\ .
\eeq
It remains to show that 
\beq\label{anton.209}
\PP\Big(\max_{k\leq n(\lceil s' \rceil)}x_k(\lceil s' \rceil)<y+m(\lceil s' \rceil)+f_{t,\gamma/2}(\lceil s' \rceil)
\ \Big| \ \mathcal{S}=s'\Big)
\eeq
tends to zero, uniformly in $s'$, as  $r\uparrow\infty$.  
But by the definition of $\mathcal{S}$, this probability is bounded by the probability that the offsprings at time $\lceil s' \rceil$ of the maximum at time $s'$
make a (downward) jump smaller than
\beq\label{anton.210}
m(\lceil s' \rceil)-m(s')+f_{t,\gamma/2}(\lceil s' \rceil)-f_{t,\gamma}(s')\ .
\eeq
By the Markov property of BBM, this is exactly
\beq\label{anton.211}
\PP\Big(\max_{k\leq n(\lceil s' \rceil-s')}x_k(\lceil s' \rceil-s')<m(\lceil s' \rceil)-m(s')+f_{t,\gamma/2}(\lceil s' \rceil)-f_{t,\gamma}(s')\Big)\ ,
\eeq
which, by Markov's inequality and the expected number of offsprings, is smaller than
\beq\label{anton.212}
e^{\lceil s' \rceil-s'}\ \PP\Big(x(\lceil s' \rceil-s')<m(\lceil s' \rceil)-m(s')+f_{t,\gamma/2}(\lceil s' \rceil)-f_{t,\gamma}(s')\Big)\ ,
\eeq
where $x$ is now a standard Brownian motion.
The first term is smaller than $e$ and $m(\lceil s' \rceil)-m(s')$ is smaller than $\sqrt{2}$. On the other hand, for $r\leq s'\leq t/2$,
\beq\label{anton.213}
f_{t,\gamma/2}(\lceil s' \rceil)-f_{t,\gamma}(s')=-{s'}^{\gamma}(1-\frac{\lceil s' \rceil^{\gamma/2}}{{s'}^{\gamma}})\leq -\frac{1}{2}r^\gamma\ ,
\eeq
where we chose $r$ large enough to get the factor $\frac{1}{2}$.
Similarly, for $t/2\leq s'\leq t-r$,
\beq\label{anton.214}
f_{t,\gamma/2}(\lceil s' \rceil)-f_{t,\gamma}(s)=-(t-s')^{\gamma}(1-\frac{(t-\lceil s' \rceil)^{\gamma/2}}{(t-s')^{\gamma}})\leq -\frac{1}{2}r^\gamma\ .
\eeq
Therefore,
\beq\label{anton.215}\begin{aligned}
\PP\Big(x(\lceil s' \rceil-s')<m(\lceil s' \rceil)-m(s')+f_{t,\gamma/2}(\lceil s' \rceil)-f_{t,\gamma}(s')\Big)\leq
\PP\big(x(\lceil s' \rceil-s')<\sqrt{2}-\frac{1}{2}r^\gamma\big)
\end{aligned}.\eeq
 Since $x(\lceil s' \rceil-s')$ is a Gaussian variable of variance $\lceil s' \rceil-s'\leq 1$, this probability
tends to zero as $r\uparrow \infty$, uniformly in $s'$.  The theorem follows.
\end{proof}

\subsection{The entropic envelope} \label{sec_entropic_repulsion}

In this section we present the proof of Theorem  \ref{teor_entropic_envelope}. 

\begin{proof} 
The claim of Theorem \ref{teor_entropic_envelope} is that, given a compact set,
 $D \subset \R$,
\beq \label{claim_entropic_zero} \bea
& \PP\Big[\exists k\leq n(t): x_k(t) \in m(t)+D, \; \text{but}\; \exists_{ s \in [r, t-r]}: x_k(s) \geq \overline{D} + E_{t, \alpha}(s)  \Big] \to 0
\eea\eeq
as $r \to\infty$, uniformly in $t> 3r$.

To see this, let $0<\gamma<\alpha<1/2$ and shorten $\underline{D} \defi \inf \{x\in D\}$. By Theorem \ref{teor_upper_envelope}, taking $y=\underline{D}$, paths of extremal particles must remain below the upper envelope for "most" of the time, hence it suffices to show that  
\beq \label{claim_entropic} \bea
& \PP\Big[\exists k\leq n(t): x_k(t) \in m(t)+D, \;  x_k(s) \leq \underline{D}+U_{t, \gamma}(s), \;\forall_{s \in [r, t-r]} \\
& \hspace{2cm} \text{but}\; \exists_{ s \in [r, t-r]}: x_k(s) \geq \overline{D} + E_{t, \alpha}(s)  \Big] \to 0
\eea\eeq
as $r \to\infty$, uniformly in $t> 3r$.  

Now, by Markov's inequality and using that $\E[n(t)]= e^t$, the probability in \eqref{claim_entropic} is at most 
\beq \label{claim_entropic_two} \bea
& \ee^t \PP\Big[x(t) \in m(t)+D, \;  x(s) \leq \underline{D}+U_{t, \gamma}(s),\;\forall_{ s \in [r, t-r]}, \text{but}\; \exists_{ s \in [r, t-r]}: x(s) \geq \overline{D}+E_{t, \alpha}(s) \Big] \\
&=\ee^t \PP\Big[x(t) \in m(t)+D,   x(s) \leq \underline{D}+\frac{s}{t} m(t) + f_{t,\gamma}(s), \forall_{s \in [r, t-r]} \\&\hspace{2cm}
\text{but} \; \exists_{ s \in [r, t-r]} : x(s) \geq \overline{D}+\frac{s}{t} m(t) - f_{t, \alpha}(s) \Big]. 
\eea\eeq
On the event $\{ x(t) \in m(t) +D \} =\{ x(t) -\overline{D} \leq m(t) \leq x(t) - \underline{D} \}$, we may replace the condition on the paths in the above probability by  
\beq\label{anton.17} 
x(s)  \leq \underline{D}+\frac{s}{t} \left\{ x(t)- \underline{D} \right\} + f_{t,\gamma}(s)  \leq \overline{D} \frac{t-s}{t}  + \frac{s}{t}  x(t) + f_{t,\gamma}(s), \quad \forall_{ s \in [r, t-r]},
\eeq
and 
\beq\label{anton.18} 
  \exists_{ s \in [r, t-r]}:x(s)  \geq \overline{D}+\frac{s}{t}\left\{x(t) -\overline{D}\right\} - f_{t, \alpha}(s) 
 = \overline{D} \frac{t-s}{t} + \frac{s}{t} x(t) - f_{t, \alpha}(s) 
 \eeq
to get that \eqref{claim_entropic_two} is at most
\beq \label{claim_entropic_three} \bea
& \ee^t \PP\Big[x(t) \in m(t)+D, \;  x(s) -\frac{s}{t} x(t) \leq  \overline{D} \frac{t-s}{t}  + f_{t,\gamma}(s)  \;\forall_{ s \in [r, t-r]} \\
& \hspace{2cm} \text{but}\; \; \exists_{ s \in [r, t-r]}: x(s)- \frac{s}{t} x(t) \geq  \overline{D} \frac{t-s}{t}  - f_{t, \alpha}(s)  \Big] \\
&  =  \ee^t \PP\left[x(t) \in m(t)+D\right]  \\
&  \times \PP\Big[ \mathfrak z_t(s) \leq  \overline{D} \frac{t-s}{t}  + f_{t,\gamma}(s),\forall_{ s \in [r, t-r]},
 \text{but} \; \exists_{ s \in [r, t-r]} : \mathfrak z_t(s) \geq \overline{D} \frac{t-s}{t}  - f_{t, \alpha}(s) \Big],
\eea\eeq
where the last step uses  the independence of $x(t)$ and $\mathfrak z_t(s), \; 0\leq s \leq t$. Since 
\beq\label{anton.19}
\ee^t \PP\left[x(t) \in D+m(t)\right] \leq \kappa  t  \int_{D} \exp\left[-\sqrt{2} x\right] \dd x,
\eeq
for some $\kappa>0$ and $t\geq 2$, Theorem \ref{teor_entropic_envelope} will follow as soon as we prove that 
\beq \bea \label{claim_entropic_four}
& t   \PP\Big[ \mathfrak z_t(s) \leq \overline{D} \frac{t-s}{t}  + f_{t,\gamma}(s)  \;\forall_{ s \in [r, t-r]},\\
& \hspace{2cm} \text{but} \; \exists_{ s \in [r, t-r]}: \mathfrak z_t(s) \geq  \overline{D} \frac{t-s}{t}  - f_{t, \alpha}(s)  \Big] \to 0,
\eea \eeq
as $r\to \infty$ uniformly in $t> 3r$. To see this, we  observe that 
\beq\label{anton.20} \bea
& \left\{ \exists_{ s \in [r, t-r]} :  \mathfrak z_t(s) \geq  \overline{D} \frac{t-s}{t}  - f_{t, \alpha}(s)   \right\}^c 
  \subseteq   \left\{ \mathfrak z_t(s) \leq \overline{D} \frac{t-s}{t}  + f_{t,\gamma}(s)  \;\forall_{ s \in [r, t-r]} \right\}.
\eea \eeq
Hence the left hand side of  \eqref{claim_entropic_four} equals
\beq \bea \label{claim_entropic_five}
& t\Bigg( \PP\left[  \mathfrak z_t(s) \leq \overline{D} \frac{t-s}{t}  + f_{t,\gamma}(s)  \;\forall_{ s \in [r, t-r] } \right]-\PP\left[ \mathfrak z_t(s) \leq  \overline{D} \frac{t-s}{t}  - f_{t, \alpha}(s), \forall_{ s \in [r, t-r]}   \right]\Bigg).
\eea \eeq
Define the functions
\beq\label{anton.21}
f(s) \defi  \overline{D} \frac{t-s}{t},
\eeq
\beq\label{anton.22}
F(s) \defi f(s) + f_{t,\gamma}(s),
\eeq
\beq\label{anton.24}
\overline{F}(s) \defi f(s) + f_{t, \alpha}(s), 
\eeq
and 
\beq\label{anton.23}
\underline{F}(s) \defi f(s) - f_{t, \alpha}(s).
\eeq
With the notations of Section \ref{sec_bb_properties}, we may then introduce the probability $P^0\left[ B^0[r, t-r]\right]$  and rewrite \eqref{claim_entropic_five} as 
\beq \bea \label{claim_entropic_six}
& t \left(P^0\left[ B^F[r, t-r]\right] - P^0\left[ B^{\underline F}[r, t-r]\right]\right) \\
& \qquad = 
t  P^0\left[ B^0[r, t-r]\right] \frac{P^0\left[ B^F[r, t-r]\right]}{P^0\left[ B^0[r, t-r]\right]} 
 \left( 1- \frac{P^0\left[ B^{\underline F}[r, t-r]\right]}{P^0\left[ B^F[r, t-r]\right]} \right).
\eea \eeq
We clearly have $\underline{F} \leq F \leq \overline{F}$. 
Moreover, we can pick $r$ large enough so that
$\underline{F}\leq 0\leq F$ on $[r,t-r]$. From Lemma \ref{monotonicity} we deduce
\beq\label{anton.25}
P^0\left[ B^{\underline F}[r, t-r]\right] \leq P^0\left[ B^0[r, t-r]\right] \leq P^0\left[ B^F[r, t-r]\right] \leq P^0\left[ B^{\overline{F}}[r, t-r]\right], 
\eeq
and therefore
\beq\label{anton.26}
\frac{P^0\left[ B^{\underline F}[r, t-r]\right]}{P^0\left[ B^{\overline{F}}[r, t-r]\right]} \leq \frac{P^0\left[ B^{\underline F}[r, t-r]\right]}{ P^0\left[ B^F[r, t-r]\right]}\leq 1\leq \frac{P^0\left[ B^{F}[r, t-r]\right]}{ P^0\left[ B^0[r, t-r]\right]}\leq \frac{P^0\left[ B^{\overline{F}}[r, t-r]\right]}{P^0\left[ B^{\underline F}[r, t-r]\right]} \ .
\eeq
This, in particular, entails that \eqref{claim_entropic_six} is at most 
\beq \label{claim_entropic_seven}
 t  P^0\left[ B^0[r, t-r]\right] \frac{P^0\left[ B^{\overline{F}}[r, t-r]\right]}{P^0\left[ B^{\underline F}[r, t-r]\right]} \left(1- \frac{P^0\left[ B^{\underline F}[r, t-r]\right]}{P^0\left[ B^{\overline{F}}[r, t-r]\right]} \right).
\eeq
By symmetry of the Brownian Bridge around the $x$-axis, we may rewrite 
\beq \bea \label{symmetry_bbridge}
\frac{P^0\left[ B^{\underline F}[r, t-r]\right]}{P^0\left[ B^{\overline{F}}[r, t-r]\right]} & = \frac{P^0\left[ B_{-\underline F}[r, t-r]\right]}{P^0\left[ B_{-\overline{F}}[r, t-r]\right]} \\
& = \frac{\PP\left[ \mathfrak z_t(s) > -f(s) + f_{t,\alpha}(s),\quad  r\leq s \leq t-r \right]}{\PP\left[ \mathfrak z_t(s) > -f(s) - f_{t,\alpha}(s), \quad r\leq s \leq t-r  \right]}. 
\eea \eeq
The goal is thus to show that \eqref{symmetry_bbridge} converges to $1$ as $r\to \infty$ uniformly in $t$. Some caution is however
needed: a simple inspection of the bounds in Lemma \ref{lem: BB estimate} shows that 
\beq \label{exploding_t_r}
\lim_{r\to \infty} \lim_{t\to\infty} t P^0\left[ B^0[r, t-r]\right] = +\infty, 
\eeq
so to prove that \eqref{claim_entropic_seven} indeed converges to $0$ as $r\to\infty$ uniformly in $t$, 
we need uniform bounds to \eqref{symmetry_bbridge} which compensate \eqref{exploding_t_r}. 
To do this, we  follow Bramson \cite[Section 6, pp.85-88]{bramson_monograph} 
 and use a Cameron-Martin or Girsanov change of measure. 

Let $P_a$ be the law of the Brownian bridge on $[0,t]$ with drift $a(s)\in L^2[0,t]$.
By Girsanov's formula, for any Borel set 
$A\subset C[0,t]$,
\beq\label{anton.27}
P_a[A] = \E\left[\exp\left( \int_0^t a(s) \dd \mathfrak z_t(s) -\frac{1}{2} \int_0^t a^2(s) \dd s +\frac{1}{2t} \left\{\int_0^t a(s) \dd s\right\}^2\right); A   \right]  ,
\eeq
This allows to deform the curve $-f+f_{t,\alpha}$ in the numerator of the r.h.s. of \eqref{symmetry_bbridge} into 
$-f-f_{t, \alpha}$ appearing in the denominator: 
\beq \bea \label{deforming_one}
& \PP\left[ \mathfrak z_t(s) > -f(s) + f_{t,\alpha}(s),\quad  r\leq s \leq t-r \right] \\
& \qquad  = \PP\left[ \mathfrak z_t(s) -\beta_{r,t}(s) >- f(s) - f_{t,\alpha}(s),\quad  r\leq s \leq t-r \right],
\eea \eeq
where
\beq \label{defi_beta}
\beta_{r,t}(s) \defi \begin{cases}
2 r^{\alpha-1} s & 0\leq s \leq r \\
2 s^\alpha & r\leq s \leq t/2 \\
2 (t-s)^\alpha & t/2 \leq s \leq t-r\\
2 r^{\alpha-1}(t-s) & t-r \leq s \leq t. 
\end{cases}
\eeq
(Assuming, say, that $t> 3r$). The process $\mathfrak z_t(s) -\beta_{r,t}(s)$
 is a diffusion with drift
$a(s)=-\beta'_{r,t}(s)$. Therefore, by change of measure, we may rewrite \eqref{deforming_one}
as
\beq
 \label{equal_two}
 \bea
& \E\left[ \exp\left( - \int_0^t \beta_{r,t}'(s) \dd \mathfrak z_t 
-\frac{1}{2} \int_0^t \beta_{r,t}'(s)^2 \dd s - \underbrace{\frac{1}{2t}\left[\int_0^t \beta_{r,t}'(s) \dd s\right]^2}_{= 0}
\right); \; A_{r,t}\right] \\
& \qquad = \E\left[ \exp\left( - \int_0^t \beta_{r,t}'(s) \dd \mathfrak z_t 
- \frac{1}{2} \int_0^t \beta_{r,t}'(s)^2 \dd s \right); \;  A_{r,t}\right],
\eea
\eeq
where $A_{r,t}$ is the event
\beq\label{anton.28}
A_{r,t} \defi \left\{\mathfrak z: \; \mathfrak z_t(s) >- f(s) - f_{t, \alpha}(s), \; s \in [r, t-r] \right\}
\eeq
To analyse this, we consider the subset
\beq\label{anton.29}
A_{r,t}^1 \defi A_{r,t} \cap \left\{\mathfrak z: \; \mathfrak z_t(s)<\Lambda_t(s), \; r\leq s \leq t-r\right\}
\eeq
with 
\beq\label{anton.30}
\Lambda_t(s) \defi \begin{cases}
2 s^\theta & 0\leq s \leq t/2 \\
2 (t-s)^\theta & t/2 \leq s \leq t,
\end{cases}
\eeq
and $\frac{1}{2}< \theta < 1-\alpha$. To control the  behavior on $A_{r,t}^1$,
note that $\beta_{r,t}'(s)$ is decreasing in $s$ and constant on $[0,r]$ and $[t-r,t]$. 
Define $l_{r,t}(s)$ as in \eqref{defi_beta}, but with $\alpha$ replaced by $\theta$.  
We now want to estimate $\int_0^t \beta_{r,t}'(s) \dd \mathfrak z_t(s)$.
Using the special form of the function $\beta_{r,t}(s)$, one sees, using 
integration  by parts, that 
\beq\label{anton.31}\bea 
\int_0^t \beta_{r,t}'(s) \dd \mathfrak z_t(s) & = -\int_r^{t-r} \beta_{r,t}''(s) \mathfrak z_t(s)\dd s \leq  -\int_r^{t-r} \beta_{r,t}''(s) \Lambda_t(s)\dd s\ ,
\eea \eeq
where the last inequality follows since  $-\beta''\geq 0$ and $\mathfrak z_t(s)\leq \Lambda_t(s)$ . Using again integration by parts on the last term in 
(\ref{anton.31}), we arrive at 
\beq\label{anton.32}
- \int_0^t \beta_{r,t}'(s) \dd \mathfrak z_t(s) \geq -\int_0^t \beta_{r,t}'(s) \dd l_{r,t}(s). 
\eeq
A simple computation shows that  the r.h.s. above is bounded from below by
 $-\kappa_1 r^{\alpha+\theta-1}$, for some positive constant $\kappa_1>0$. 

Another simple computation gives that
\beq\label{anton.33}
-\frac{1}{2} \int_0^t \beta_{r,t}'(s)^2 \dd s \geq -\kappa_2 r^{2\alpha-1},
\eeq
for another constant $\kappa_2>0$. Consequently, \eqref{equal_two} is bounded from below by
\beq \label{equal_three}
\exp[- \kappa_3 r^{\alpha+\theta-1}] \PP\left[A_{r,t}^1 \right]
\eeq
for some constant $\kappa_3>0$. Notice  that for our choice, $\alpha+\theta-1$ is  negative. Hence \eqref{equal_three} approaches 
$\PP\left[A_{r,t}^1 \right]$ for large $r$. 

We have thus reduced the problem to showing that
\beq \label{equal_four}
\PP\left[A_{r,t}^1 \right]\Big/\PP\left[A_{r,t} \right] \to 1,
\eeq
 as $r\to \infty$ uniformly in $t$. To do so, observe that
\beq \label{equal_five}\bea 
\PP\left[A_{r,t}^1 \right]\Big/\PP\left[A_{r,t} \right] & = \PP\left[ \mathfrak z_t(s)<\Lambda_t(s), \; r\leq s \leq t-r \Big| A_{r,t} \right].
\eea \eeq
Recalling that $f(s) = \frac{t-s}{t} \overline{D}$, we see that there exists a (finite) $r_o = r_o(\overline{D}, C, \alpha)$ such that    
\beq\label{anton.34}
-f(s) - f_{t, \alpha}(s) < 0 \quad \text{for}\quad r\leq s \leq t-r,
\eeq
as soon as $r\geq r_o$. We can therefore use  the monotonicity property \eqref{below_lambda} from Lemma \ref{monotonicity}, which ensures that,
for all $r\geq r_o$, the r.h.s. in \eqref{equal_five} is not smaller than
\beq\label{anton.35} \bea
& \PP\left[ \mathfrak z_t(s)<\Lambda_t(s), \; r\leq s \leq t-r \Big|  \mathfrak z_t(s)> 0, \; r\leq s \leq t-r  \right] = P^0\left[ B^{\Lambda_t}[r, t-r] \big| B_0[r, t-r] \right],
\eea \eeq
which, by  \eqref{more_bound} from Lemma \ref{more_than_fluctuations}, is in turn bounded from below by
\beq \label{at_least} \bea
& 1- \kappa_4 \sum_{k=r}^\infty k \exp\left[- C k^{2\theta-1}\right] = 1-\kappa_4 \sum_{k=r}^\infty k \exp\left[- C k^{1-2\alpha}\right]
\eea \eeq
for some  $\kappa_4>0$. \\

\eqref{at_least} provides a uniform lower bound on \eqref{symmetry_bbridge}. 
Therefore, \eqref{claim_entropic_seven} is, up to irrelevant numerical constants, smaller than 
\beq \bea \label{claim_entropic_eight}
& t  P^0\left[ B^0[r, t-r]\right]   \frac{\sum_{k=r}^\infty k \exp\left[- C k^{1-2\alpha}\right]}{1-\sum_{k=r}^\infty k \exp\left[- C k^{1-2\alpha}\right]} \\
& \quad \leq t  P^0\left[ B^0[r, t-r]\right]  
\frac{\int_r^\infty x \exp\left[- C x^{1-2\alpha}\right] \dd x }{1-\int_r^\infty x \exp\left[- C x^{1-2\alpha}\right] \dd x}\\
& \stackrel{\text{Lem.} \ref{lem: BB estimate}}{\leq} t\frac{2r}{t-2r}  \frac{\int_r^\infty x \exp\left[- C x^{1-2\alpha}\right] \dd x}{1-\int_r^\infty x \exp\left[- C x^{1-2\alpha}\right] \dd x}.
\eea \eeq
Since $\alpha<1/2$, the integral appearing in the last term converges  to $0$, as $r\to \infty$. This implies that 
\eqref{claim_entropic_eight} converges to $0$, as $r\to \infty$, uniformly in $t$. This concludes the  proof. 
\end{proof}

\subsection{The lower envelope} \label{sec_lower_envelope}

\begin{proof}[Proof of Theorem \ref{teor_lower_envelope}]
Recall that we are given a compact set $D\subset \R$, $E_{t, \beta}$ a curve such as in \eqref{def_entropic_envelope} (with parameters $1/2< \beta < 1$). 
Theorem \ref{teor_lower_envelope} asserts that, with $\overline{D} \defi \sup\{x\in D\}$,  
\beq \label{claim_lower_one} \bea
& \PP\Big[\exists k\leq n(t):\; x_k(t)\in m(t)+D, \; \text{but}  \exists_{ r \leq s \leq t-r}: x_k(s) \leq \overline{D}+ E_{t, \beta}(s)  \Big]\downarrow 0,  
\eea \eeq 
as $r\to \infty$, uniformly in $t> 3r$. 

To see this, let $\alpha$ such that $0< \alpha < 1/2 < \beta$ and set $\underline{D} \defi \inf\{x: \, x\in D\}$. To prove \eqref{claim_lower_one}, 
by Theorem \ref{teor_entropic_envelope} it suffices to prove that 
\beq \bea \label{claim_lower_one_one}
& \PP\Big[\exists k\leq n(t):\, x_k(t)\in m(t)+D, \; x_k(s) \leq 
\underline{D}+E_{t,\alpha}(s)\;  \forall_{ s \in [r,t-r]}, \\
& \hspace{5cm} \text{but} \exists_{s \in [r,t-r]}:  x_k(s) \leq 
\overline{D}+ E_{t, \beta}(s) \Big] \downarrow 0,
\eea
\eeq
as $r\to \infty$, uniformly in $t> 3r$.

Now, by Markov's inequality, and using  that, on the event  $\{x(t) \in D+ m(t)\}$, one has that
$x(t)- \overline{D} \leq m(t) \leq x(t) -\underline{D}$, one gets that the probability in \eqref{claim_lower_one_one} is
at most  
\beq \label{claim_lower_two} \bea
& \ee^t \PP\Big[x(t)\in m(t)+D\Big]   \PP\Big[ \mathfrak z_t(s) \leq \left\{\overline{D}- \frac{s}{t}\underline{D} \right\}- f_{t,\alpha}(s), \forall_{r \leq s \leq  t-r}, \\
& \hspace{2cm} \text{but} \; \exists_{ r \leq s \leq t-r}:\mathfrak z_t(s) \leq \left\{\overline{D}- \frac{s}{t}\underline{D} \right\}- f_{t,\beta}(s)  \Big],
\eea \eeq  
where we have used that $\mathfrak z_t(s) \defi x(t) -\frac{s}{t}x(s)\, (0\leq s \leq t)$, is a Brownian bridge independent of $x(t)$.
Clearly, 
\beq\label{anton.43}
 \ee^t \PP\Big[x(t)\in m(t)+D\Big]  \leq \kappa_1 t \int_D \ee^{-\sqrt{2} x} \dd x,
\eeq
for some $\kappa_1 >0$ ($t\geq 2$); moreover,  with $\text{diam}(D) \defi  |\overline{D}|+ |\underline{D}|$, the second factor in 
\eqref{claim_lower_two} is bounded from above by
\beq\label{anton.44} \bea
\PP\Big[ \mathfrak z_t(s) \leq \text{diam}(D) - f_{t,\alpha}(s),\forall_{r \leq s \leq  t-r}, \text{but}\;  \exists_{ r \leq s \leq t-r}: \mathfrak z_t(s) \leq \text{diam}(D) - f_{t,\beta}(s) \Big].
\eea \eeq
Thus, Theorem \ref{teor_lower_envelope} will follow as soon as we prove that 
this term multiplied by $t$ tends to zero, as $r\uparrow \infty$, uniformly in 
$t>3r$.

Let $0<a<1$ such that $2a\beta-1>0$. To given compact $D \subset \R$ we may find $\tilde r = \tilde r(\alpha, \beta, D,a)$ such that for $r\geq \tilde r$ one has 
$\text{diam}(D) - f_{t, \alpha}(s) \leq 0$, as well as
$\text{diam}(D) -  f_{t,\beta}(s) \leq -f_{t, a\beta}(s)$ for all $r\leq s \leq t-r$ . 
 Hence, $t$ times \eqref{claim_lower_two} is at most 
\beq \label{claim_lower_four} \bea
& t \PP\Big[ \mathfrak z_t(s) \leq 0, \forall_{r \leq s \leq  t-r}, 
 \text{but}\;  \exists_{ r \leq s \leq t-r}: \mathfrak z_t(s) \leq - f_{t,a\beta}(s)  \Big] \\
& \qquad = t \PP\Big[ \mathfrak z_t(s) \geq 0, \forall_{r \leq s \leq  t-r},  \text{but}\; \exists_{ r \leq s \leq t-r}: \mathfrak z_t(s) \geq f_{t,a\beta}(s)  \Big],
\eea \eeq
where the  last equality is due to the  symmetry of the Brownian bridge around the $x$-axis. But \eqref{claim_lower_four} equals 
\beq \label{claim_lower_five} \bea
& t \left(\PP\Big[ \mathfrak z_t(s) \geq 0 \forall_{ r \leq s \leq  t-r}\Big] - \PP\left[ 0 \leq  \mathfrak z_t(s) \leq f_{t,a\beta}(s) \forall_{r \leq s \leq t-r} \right] \right) \\
& \qquad = t P^0\left[ B_0[r, t-r] \right] \left\{1 -P^0\left[ B^{f_{t, a\beta}}[r \leq s \leq t-r] \Big| B_0 \right] \right\} \\
& \qquad \stackrel{\eqref{more_bound}}{\leq} \kappa  t P^0\left[ B_0[r, t-r] \right]  \int_{r}^\infty x \ee^{- x^\delta} \dd x,
\eea \eeq
with $\kappa>0$ and $\delta \defi 2a\beta -1$. The last factor tends to zero faster than any power, as $r\uparrow \infty$. By 
Lemma \ref{lem: BB estimate},
\beq\label{anton.45}
tP^0\left[ B_0[r, t-r] \right] \leq \kappa \frac{rt}{t-2 r},
\eeq
which is smaller than $\kappa r$, if $t>3r$. Hence the right-hand side of 
(\ref{claim_lower_five}) tends to zero with $r$, uniformly on $t>3r$, which 
proves the theorem.
\end{proof}

{\bf Acknowledgments.} The list of people we are indebted to for useful conversations on various issues related to this work is long: in particular, we wish to express our gratitude to D. Aldous, E. Bolthausen, M. Bramson, J. Berestycki, P. L. Ferrari, D. Gruhlke, S.C. Harris, D. Ioffe, L. Zambotti,
 and O. Zeitouni. We also thank the anonymous referee for the careful reading and the insightful observations which have led to an improvement of the paper.

\end{document}